\algnewcommand\algorithmicinput{\textbf{Initialization:}}
\algnewcommand\init{\item[\algorithmicinput]}
\algnewcommand\algorithmicawake{\textsf{\textit{{AWAKE}}}}
\algnewcommand\awake{\item[\algorithmicawake]}
\algnewcommand\algorithmicidle{\textsf{\textit{{IDLE}}}}
\algnewcommand\idle{\item[\algorithmicidle]}
\newcommand{\broad}{{\small \textbf{BROADCAST }}}
\newcommand{\stopp}{{\small \textbf{STOP }}}
\newcommand{\iidle}{\textsf{\textit{{IDLE }}}}
\newcommand{\aawake}{\textsf{\textit{{AWAKE }}}}
\renewcommand{\natural}{{\mathbb{N}}}
\newcommand{\norm}[1]{\|#1\|}
\newcommand{\until}[1]{\{1,\ldots,#1\}}
\newcommand{\EE}{\mathcal{E}}
\newcommand{\GG}{\mathcal{G}}
\newcommand{\NN}{\mathcal{N}}
\newcommand{\VV}{\mathcal{V}}
\newcommand{\st}{\text{subject to }}
\newcommand{\m}{\mathop{\rm minimize}}
\newcommand{\R}{\mathbb{R}}
\newcommand{\p}{\bm{p}}
\newcommand{\x}{\bm{x}}
\newcommand{\h}{{h}}
\newcommand{\g}{{g}}
\newcommand{\y}{\bm{y}}
\newcommand{\LL}{\bm{\theta}}
\newcommand{\rhoij}{\rho_{ij}}
\newcommand{\peqi}{\varrho_{i}}
\newcommand{\pini}{\zeta_{i}}
\newcommand{\tik}{{\tau_i^k}}
\newcommand{\tLag}{\tilde{\mathcal{L}}}
\newcommand{\Lag}{\mathcal{L}}
\newcommand{\im}{{i,m}}
\newcommand{\mE}{{m_i^E}}
\newcommand{\mI}{{m_i^I}}
\theoremstyle{plain}
\newtheorem{theorem}{Theorem}%[section]
\newtheorem{proposition}[theorem]{Proposition}
\newtheorem{corollary}[theorem]{Corollary}
\newtheorem{lemma}[theorem]{Lemma}
\theoremstyle{definition}
\newtheorem{assumption}{Assumption}
\theoremstyle{remark}
\newtheorem{remark}{Remark}
\newcommand\oprocendsymbol{\hbox{$\square$}}
\newcommand\oprocend{\relax\ifmmode\else\unskip\hfill\fi\oprocendsymbol}
\begin{document}

\title{A Distributed Asynchronous Method of Multipliers\\
for Constrained Nonconvex Optimization
\thanks{This result is part of a
  project that has received funding from the European Research Council (ERC)
  under the European Union's Horizon 2020 research and innovation programme
  (grant agreement No 638992 - OPT4SMART).\newline
  \copyright~2019. This manuscript version is made available under the CC-BY-NC-ND 4.0 license \url{http://creativecommons.org/licenses/by-nc-nd/4.0/}}}

\author[1]{Francesco~Farina}
\author[1]{Andrea~Garulli}
\author[1]{Antonio~Giannitrapani}
\author[2]{Giuseppe~Notarstefano}
\affil[1]{Dipartimento di Ingegneria dell'Informazione e Scienze Matematiche, Universit{\`a} di Siena, Siena, Italy.}% <-this % stops a space
\affil[2]{Department of Electrical, Electronic and Information Engineering ``G. Marconi'', Universit{\`a} di Bologna, Bologna, Italy.}% <-this % stops a space

\date{}

\maketitle

\begin{abstract}
This paper presents a fully asynchronous and distributed approach for 
tackling optimization problems in which both the objective function and 
the constraints may be nonconvex. In the considered network setting each 
node is active upon triggering of a local timer and has access only to a 
portion of the objective function and to a subset of the constraints. In 
the proposed technique, based on the method of multipliers, each node 
performs, when it wakes up, either a descent step on a local augmented 
Lagrangian or an ascent step on the local multiplier vector. Nodes 
realize when to switch from the descent step to the ascent one through 
an asynchronous distributed logic-AND, which detects when all the nodes have reached a predefined tolerance in the minimization of the augmented 
Lagrangian. It is shown that the resulting distributed algorithm is 
equivalent to a block coordinate descent for the minimization of the 
global augmented Lagrangian. This allows one to extend the 
properties of the centralized method of multipliers to the considered 
distributed framework. Two application examples are presented to 
validate the proposed approach: a distributed source localization 
problem and the parameter estimation of a neural network.
\end{abstract}

% Note that keywords are not normally used for peerreview papers.

\section{Introduction}
\label{sec:introduction}
Nonconvex optimization problems are commonly encountered when dealing with control, estimation and learning within cyber-physical networks. In these contexts, typically each device knows only a portion of the whole objective function and a subset of the constraints, so that, to avoid the presence of a central coordinator, distributed algorithms are needed. 

Distributed optimization algorithms handling local constraints are basically designed for convex problems, except for some specific problem settings. In~\cite{lee2013distributed}, the authors propose a distributed random projection algorithm, while a proximal based algorithm is presented in~\cite{margellos2017distributed}. A subgradient projection algorithm has been presented in~\cite{nedic2009distributed} and an extension taking into account communication delays is given in~\cite{LIN2016120}. In~\cite{necoara2013random} randomized block-coordinate descent methods are employed, to solve convex optimization problems with linearly coupled constraints over networks. 

Another relevant class of algorithms is that of distributed primal-dual methods (see, e.g.~\cite{chang2014distributed,yuan2016regularized,SHI201855}). Within this framework, an iterative scheme combining dual decomposition and proximal minimization is introduced in~\cite{FALSONE2017149}.
Distributed approaches based on the Alternating Directions Method of Multipliers (ADMM) are presented in~\cite{iutzeler2016explicit,bianchi2014stochastic,bianchi2016coordinate}.

Asynchronous communication protocols are a typical requirement in real world networks (see, e.g.,~\cite{bertsekas1989parallel} and references therein). Several asynchronous version of distributed optimization algorithms have been proposed in literature, a typical example being the class of gossip-based algorithms~\cite{boyd2006randomized,nedic2011asynchronous}. By building on these works, an asynchronous algorithm based on the Method of Multipliers and accounting for communication failures is introduced in~\cite{jakovetic2011cooperative}. In~\cite{wei20131} an asynchronous ADMM is proposed for a separable, constrained optimization problem. An asynchronous proximal dual algorithm has been presented in~\cite{notarnicola2017asynchronous}.

Distributed algorithms for nonconvex optimization have started to appear in the literature only recently. In~\cite{bianchi2013convergence} a stochastic gradient method is proposed to minimize the sum of smooth nonconvex functions subject to a constraint known to all agents. In~\cite{wai2016projection} a decentralized Frank-Wolfe method for finding a stationary point of the sum of differentiable and nonconvex functions is given. In~\cite{di2016next,sun2016distributed} the authors propose distributed algorithms, respectively for balanced and general directed graphs, based on the idea of tracking the whole function gradient and performing successive convex approximation of the nonconvex cost function. Notice that the approaches in~\cite{bianchi2013convergence,wai2016projection,di2016next,sun2016distributed} do not deal with local constraints, but only with global ones known to all the agents. A perturbed push-sum algorithm for the unconstrained minimization of the sum of nonconvex smooth functions is given in~\cite{tatarenko2017non}. A distributed algorithm dealing with local constraints has been presented in~\cite{notarnicola2016randomized} for a structured class of nonconvex optimization problems. 

The contribution of this paper is a \emph{fully} distributed asynchronous optimization algorithm, hereafter referred to as ASYnchronous Method of Multipliers (ASYMM). The proposed algorithm addresses constrained optimization problems over networks, in which both local cost functions and local constraints may be nonconvex. It features two types of local updates at each node: a primal descent and a multiplier update, which are regulated by an asynchronous distributed logic-AND algorithm.
An interesting feature of ASYMM is that a node does not need to wait for all multiplier updates to start a new primal descent, but rather it just needs to receive the neighbors' multipliers.

The main theoretical result consists in showing that ASYMM implements a suitable inexact version of the Method of Multipliers, in which the primal minimization is performed by means of a block coordinate descent algorithm up to a given tolerance. Thanks to this connection, ASYMM inherits the main properties of the corresponding centralized method~\cite{BERTSEKAS1976133,bertsekas2014constrained}. 
A further contribution is to provide a bound on the norm of the augmented Lagrangian gradient based on the local tolerances, which is instrumental to recover convergence results in the case of inexact primal minimization (see, e.g.,~\cite[Section~2.2.5]{bertsekas2014constrained}).
Finally, it is shown that the proposed algorithm can effectively solve big-data problem (i.e., with a high dimensional decision variable). Indeed, thanks to its block-wise structure, each agent can optimize over and transmit only one block of the entire solution estimate.

The paper is organized as follows.
In Section~\ref{sec:setup} the distributed optimization set-up is presented. The
proposed algorithm is presented in Section~\ref{sec:algorithm} and
analyzed and discussed in Section~\ref{sec:analysis}. In Section~\ref{sec:extensions} an extension for dealing with high dimensional optimization problems is presented. Finally, two numerical applications are
presented in Section~\ref{sec:numerical} and some conclusions are drawn in Section~\ref{sec:conclusions}.

\section{Set-up and Preliminaries}
\label{sec:setup}

\subsection{Notation and definitions}
Given a matrix $A\in\R^{n\times m}$ we denote by
$A[i,j]$ the $(i,j)$-th element of $A$, by $A[:,i]$ its $i$-th column, by $A[i,:]$
its $i$-th row and by $A[i{:}j,k]$ the elements of the $k$-th column of $A$ from row $i$ to $j$. We write $A\left[i,:\right]=b$ to assign the value $b$ to all
the elements in the $i$-th row of $A$. Given two vectors $a,b\in\R^{n}$ and a constant $c$, we write $a>c$ if all elements of $a$
are greater than $c$ and $a>b$ if $a[i]>b[i]$ for all $i$. 
If $J=\{j_1,...,j_m\}$ is a set of indexes, we denote by
$[z_j]_{j\in J}$ the vector $[z_{j_1}^\top,...,z_{j_m}^\top]^\top$.

The following definitions will be useful in the following.  

A function $\Psi(x)$
has \emph{Lipschitz continuous gradient} if there exists a constant $L$ such
that $\|\nabla\Psi(x) - \nabla\Psi(y)\|\leq L\|x-y\|$ for all $x,y$.  It is
\emph{$\sigma$-strongly convex} if
$(\nabla\Psi(x) - \nabla\Psi(y))^\top(x-y)\geq \sigma\|x-y\|^2$.

Let $\x=[x_1^\top,...,x_N^\top]^\top$, with $x_i\in\R^{n}$ and let $U=[U_1,\dots,U_N]$, with $U_i\in\R^{Nn\times n}$ for all $i$, be a partition of the
identity matrix such that $\x = \sum_{i=1}^N U_i x_i$ and $x_i=U_i^\top\x$.
The function $\Phi(\x)$ has \emph{block component-wise Lipschitz continuous gradient} if
there are constants $L_i\geq 0$ such that
$ \|\nabla_{x_i}\Phi(\x+U_i s_i) - \nabla_{x_i}\Phi(\x)\|\leq L_i\|s_i\| $
for all $\x\in \R^{Nn}$ and $s_i\in\R^{n}$.

We say that indexes in
  $\until{N}$ are drawn according to an \emph{essentially cyclic} rule if there
  exists $M\geq N$ such that every $i\in\until{N}$ is drawn at least once every
  $M$ extractions.

\subsection{Distributed Optimization Problem}
Consider the following optimization problem
\begin{equation}\label{pb:problem}
\begin{aligned}
& \m_x
& & \sum_{i=1}^N f_i(x)\\
& \st
& & \h_i(x)=0, & i=1,...,N,\\
& & & \g_i(x)\leq 0, & i=1,...,N,
\end{aligned}
\end{equation}
where $f_i:\R^n\to\R$, $\h_i:\R^n\to\R^\mE$ and $\g_i:\R^n\to\R^\mI$.
Throughout the paper the following assumption is made.
\begin{assumption}\label{asm:C2_functions}
  Functions $f_i$ and each component of $\h_i,\g_i$ are of class $C^2$ and have bounded Hessian. Problem~\eqref{pb:problem} has at least one feasible solution, every local minimum of~\eqref{pb:problem} is a regular point\footnote{A feasible vector $x$ is said to be regular if the gradients of the equality constraints and those of the inequality constraints active at $x$ are linearly independent.} and it satisfies the second order sufficiency conditions. \oprocend
\end{assumption}

The aim of the paper is to present a method for solving problem~\eqref{pb:problem} in a distributed way, by employing a network of $N$ peer
processors without a central coordinator. Each processor has a local memory,
local computation capability and can exchange information with neighboring
nodes. Moreover, functions $f_i$, $\h_i$ and $\g_i$ are private to node $i$.
The network is described by a fixed, undirected and connected graph
$\GG=(\VV,\EE)$, where $\VV=\{1,...,N\}$ is the set of nodes and
$\EE\subseteq\{1,...,N\}\times\{1,...,N\}$ is the set of edges.
We denote by
$\NN_i=\{j\in \VV\mid (i,j)\in\EE\}$ the set of the neighbors of node
$i$ and by $d_i= |\NN_i|+1$. Also, we denote by $d_G$ the diameter of $\GG$.

Regarding the communication protocol, a generalized version of the
asynchronous model presented in~\cite{notarnicola2017asynchronous} is considered. Each node
has its own concept of time defined by a local timer, which triggers when the node has to
awake, independently of the other nodes.  Between two triggering events
each node is in \iidle mode, i.e., it listens for messages from neighboring
nodes and, if needed, updates some local variables, but it does not broadcast
any information. When a trigger occurs, the node switches to \aawake mode, performs
local computations and sends the updated information to neighbors.

\begin{assumption}[Local timers]\label{asm:timing}
  For each node $i$, there exists a constant $\bar{T}_i$ such that node $i$ wakes up at least once in every time interval of length $\bar{T}_i$.\oprocend
\end{assumption}
\begin{assumption}[No simultaneous awakening]\label{asm:no_awake}
  Only one node can be awake at each time instant. \oprocend
\end{assumption}

  Assumption~\ref{asm:timing} implies that in a time interval $\bar{T}=\max_{i\in\until{N}}\bar{T}_i$ each node is awake at least once. Hence, under Assumption~\ref{asm:no_awake}, nodes wake up in an essentially cyclic way. In practice, Assumption~\ref{asm:no_awake} can be relaxed, allowing for non neighboring nodes to be awake in the same time instant, at the price of a slightly more involved analysis of the proposed algorithm.

\subsection{Equivalent Formulation and Method of Multipliers}
In order to solve Problem~\eqref{pb:problem} by means of the above defined network, let us rewrite it in the following form
\begin{equation}\label{pb:distributed_problem}
\begin{aligned}
&\m_{x_1,...,x_N}
& &  \sum_{i=1}^N f_i(x_i)\\
& \st
& & x_i=x_j,& \forall(i,j)\in \EE,\\
& & & \h_i(x_i)=0, & i\in\VV,\\
& & & \g_i(x_i)\leq 0, & i\in\VV.
\end{aligned}
\end{equation}
where $x_i\in\R^n$ for all $i\in\VV$. Notice that, thanks to the connectedness of $\GG$, problems~\eqref{pb:distributed_problem} and~\eqref{pb:problem} are equivalent.

Let us now introduce the augmented Lagrangian associated to
problem~\eqref{pb:distributed_problem}.
Let $\nu_{ij}\in\R^n$ and $\rhoij\in\R$ be the multiplier and penalty parameter
associated to the equality constraint $x_i = x_j$. We compactly define
$\nu_i=[\nu_{ij}]_{j\in\NN_i}$, $\rho_i=[\rhoij]_{j\in\NN_i}$. Similarly, let $\lambda_{i}\in\R^\mE$
and $\peqi\in\R$ ( respectively $\mu_{i}\in\R^\mI$ and $\pini\in\R$) be the multiplier and penalty
parameter associated to the equality (respectively inequality)
constraint of node $i$.
Moreover, let $\x=[x_1^\top,...,x_N^\top]^\top$; denote by $\p=[\rho_i,\peqi,\pini]_{i\in\VV}$ the vector
stacking all the penalty parameters; $\nu=[\nu_i]_{i\in\VV}$,
$\lambda=[\lambda_i]_{i\in\VV}$ and $\mu=[\mu_i]_{i\in\VV}$
be the vectors stacking the corresponding multipliers, and, consistently, let
$\LL=[\nu^\top,\lambda^\top,\mu^\top]^\top$. 
Let us define for notational convenience
\begin{equation}\label{eq:q_operator}
	q_{c}(a,b)=\frac{1}{2c}\left(\max\{0, a+cb\}^2 -a^2\right).
\end{equation}
When $a$ and $b$ are vectors, the right hand side in~\eqref{eq:q_operator} is intended component-wise.
Then, the augmented Lagrangian
associated to~\eqref{pb:distributed_problem} is defined as
\begin{align}
\Lag_{\p}(\x,\LL)=&\sum_{i=1}^N \bigg\lbrace f_i(x_i) \nonumber\\
&+\sum_{j\in \NN_i}\left[\nu_{ij}^\top(x_i-x_j)+\frac{\rhoij}{2}\|x_i-x_j\|^2\right]+\nonumber\\
&+\lambda_{i}^\top \h_{i}(x_i)+\frac{\peqi}{2}\| \h_{i}(x_i)\|^2+\nonumber\\
&+\bm{1}^\top q_{\pini}(\mu_i, \g_i(x_i))
\bigg\rbrace.\label{eq:L}
\end{align}

Notice that, more generally, one can associate a different penalty parameter to each component of the equality and inequality constraints of each node. This extension is omitted in order to streamline the presentation.

A powerful method for solving problem~\eqref{pb:distributed_problem} is the well
known Method of Multipliers, which consists of the following steps (see
e.g. \cite{rockafellar1974augmented, bertsekas2014constrained}),
\begin{align}
	\x^{k+1} &= \arg \min_{\x} \Lag_{\p^k}(\x,\LL^k)\label{eq:x_minimization}\\
	\nu_{ij}^{k+1} &= \nu_{ij}^k+\rho_{ij}^k(x_i^{k+1}-x_j^{k+1}), &\forall(i,j)\in\EE,\label{eq:nu}\\
	\lambda_{i}^{k+1} &= \lambda_{i}^k + \peqi^k \h_{i}(x_i^{k+1}),&\forall i\in\VV,\\
	\mu_{i}^{k+1} &= \max\{0,\, \mu_{i}^k+\pini^k \g_{i}(x_i^{k+1})\},& \forall i\in\VV,\label{eq:mu}
\end{align}
where the max operator is to be intended component-wise and $\p^{k+1}\geq\p^k\geq ...\geq \p^0> 0$. 

A typical update rule for a penalty parameter $\rhoij$ associated to an equality constraint $x_i=x_j$ is 
\begin{equation}\label{eq:penalty_eq_x}
  \rhoij^{k+1}=
  \begin{cases}
    \beta \rhoij^k,&\text{if }\norm{x_i^{k+1}-x_j^{k+1}}>\gamma \norm{x_i^k-x_j^k},\\
    \rhoij^k,&\text{otherwise},
  \end{cases}
\end{equation}
where $\beta$ and $\gamma$ are positive constants (see~\cite[Section~2.2.5]{bertsekas2014constrained}). Similarly, the update rule
for a penalty parameter $\peqi$ associated to an equality constraint $h_i(x_i)=0$ is
\begin{equation}\label{eq:penalty_eq}
  \peqi^{k+1}=
  \begin{cases}
    \beta \peqi^k,&\text{if }\norm{h_i(x_i^{k+1})}>\gamma \norm{h_i(x_i^k)},\\
    \peqi^k,&\text{otherwise},
  \end{cases}
\end{equation}
while the rule for a penalty parameter $\pini$ associated to an inequality constraint $g_i(x_i)\leq 0$ is
\begin{equation}\label{eq:penalty_ineq}
  \pini^{k+1}=
  \begin{cases}
    \beta\pini^k,&\text{if }\norm{g_i^+(x_i^{k+1},\mu_i^{k+1},\zeta_i^k)}>\\
    & \qquad>\gamma\norm{g_i^+(x_i^k,\mu_i^{k},\zeta_i^k)},\\
    \pini^k,&\text{otherwise},
  \end{cases}
\end{equation}
where $g_i^+(x_i,\mu_i,\zeta_i)=\max\{g_i(x_i),-\frac{\mu_i}{\zeta_i}\}$.

The minimization step~\eqref{eq:x_minimization} can be carried out approximately at each step $k$, up to a certain precision $\varepsilon^k$. If the sequence $\{\varepsilon^k\}\to 0$ as $k\to\infty$, the minimization step is said \emph{asymptotically exact} (see~\cite[Section~2.5]{bertsekas2014constrained}).

Sufficient conditions guaranteeing the convergence of method~\eqref{eq:x_minimization}-\eqref{eq:mu} to a local
minimum of problem~\eqref{pb:distributed_problem} have been given, e.g., in
\cite{bertsekas2014constrained}.  One of these conditions involves the
regularity of the local minima of the optimization problem. In general, such a
condition is not verified in problem~\eqref{pb:distributed_problem} due to the
constraints $x_i=x_j$ for all
$(i,j)\in\EE$. In~\cite{matei2014extension,matei2015distributed} the results
in~\cite{bertsekas2014constrained} have been extended to deal with the non
regularity of the local minima of problem~\eqref{pb:distributed_problem}. With respect to those works, the main novelty of the solution proposed in this paper is that the network model is asynchronous and the switching between a primal and a multiplier update is performed by the nodes in a fully distributed way.

\section{Asynchronous Method of Multipliers}
\label{sec:algorithm}
In this section, the Asynchronous Method of Multipliers (ASYMM) for solving problem~\eqref{pb:distributed_problem} in an asynchronous
and distributed way is presented. Let us first present a distributed
algorithm whose aim is to check whether all nodes in an asynchronous network have set a local flag to one. It can be seen as the asynchronous counterpart of the synchronous logic-AND algorithm presented in~\cite{ayken2015diffusion}.

\subsection{Asynchronous distributed logic-AND}\label{subsec:AND}
Each node in the network is assigned a flag
$C_i$ that is initially set to $0$ and is then changed to $1$ in finite time.
The aim of the asynchronous distributed logic-AND algorithm is to check if all
the nodes have $C_i=1$.

Each node $i$ stores a matrix $S_i\in\{0,1\}^{d_G\times d_i}$ which contains
information about the status of the node itself and its neighbors.  Let
$S_i[l,j_i]$ denote the element in the $l$-th row and $j_i$-th column of
$S_i$, where $j_i$ is the index associated to node $j$ by node $i$. The
elements $S_i[1,j_i]$ for $j\in \NN_i$ represent the values of the
flags of nodes $j\in\NN_i$ and $S_i[1,d_i]$ the one of node $i$
itself. This means that $S_i[1,d_i]=C_i$.
Moreover, for $l=2,\dots,d_G$, the element $S_i[l,j_i]$,
$j\in \mathcal{N}_i$, contains the status of the $(l-1)$-th row of
$S_j$, which is defined as the product of all its entries. Similarly, $S_i[l,d_i]$
contains the status of the $(l-1)$-th row of $S_i$ and it is computed as
\begin{equation}
S_i[l,d_i]=\prod_{b=1}^{d_i}S_i[l-1,b].
\end{equation}
Hence, one has that $S_i[l,d_i]=1$ if and only if $S_i[l-1,j_i]=1$ for
all $j\in \NN_i$ and $S_i[l-1,d_i]=1$.
\begin{algorithm}
	\begin{algorithmic}
          \init $C_i\gets 0$, 
          $S_i \gets \bm{0}_{d_G\times d_i}$
		\item[]
		\awake
		\If{$\prod_{b=1}^{d_i}S_i[d_G,b]\neq1$}
		\State $S_i[1,d_i]\gets C_i$
		\State $S_i[l,d_i]\gets\prod_{b=1}^{d_i}S_i[l-1,b]$ for $l=2,...,d_G$
		\State \broad $S_i[:,d_i]$ to all $j\in \NN_i$
		\EndIf
		\item[]
		\If{$\prod_{b=1}^{d_i}S_i[d_G,b]=1$}
		\State \stopp and send \stopp signal to all $j \in \NN_i$
		\EndIf
              \item[] \idle \If{$S_j[:,d_j]$ received from
                  $j\in \NN_i$ and not received a \stopp
                  signal} 
		\State $S_i[l,j_i]\gets S_j[l,d_j]$ for $l=1,...,d_G$
		\EndIf
		\State \textbf{if} \stopp received, set $S_i[d_G,:]\gets 1$

	\end{algorithmic}
	\caption{Asynchronous distributed logic-AND}\label{alg:async_AND}
\end{algorithm}

A pseudo code of the distributed logic-AND algorithm is reported in Algorithm~\ref{alg:async_AND}.
Notice, in particular, that node $i$ has to broadcast to all
its neighbors only the last column of $S_i$, i.e. $S_i[l,d_i]$ for
$l=1,...,d_G$. Moreover, it stores only the $d_j$-th column of the matrices $S_j$
of its neighbors $j\in \NN_i$, whenever it receives them.

It is apparent that a node will stop only when the last row of its
matrix $S_i$ is composed by all $1$s, i.e. when
\begin{equation}\label{eq:stop_S_equation}
\prod_{b=1}^{d_i}S_i[d_G,b]=1.
\end{equation}

In the following result it is shown that~\eqref{eq:stop_S_equation} is satisfied
at some node if and only if $C_i=1$ for all $i$.

\begin{proposition}
\label{prop:logic-AND}
  Let Assumption~\ref{asm:timing} hold.
If there exists a time instant after which $C_j{=}1$ indefinitely for all $j\in\VV$, then $\prod_{b=1}^{d_\ell}S_\ell[d_G,b]=1$ in finite time for all nodes $\ell\in\VV$.
Conversely, if  there exists a node $\ell$ satisfying $\prod_{b=1}^{d_\ell}S_\ell[d_G,b]=1$ at a certain time instant, then every node $j\in\VV$ must have had $C_j=1$ at some previous time instant.
\end{proposition}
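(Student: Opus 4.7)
The strategy is to establish, by induction on the row index $l$, a correspondence between the Boolean entry $S_i[l,d_i]=1$ and the event that every node within graph distance $l-1$ of $i$ has, at some earlier time, had its local flag equal to one. Once this invariant is in place, both directions follow from the fact that $d_G$ is the diameter of $\GG$: the full stopping condition $\prod_b S_\ell[d_G,b]=1$ couples $S_\ell[d_G,d_\ell]$ (which witnesses distance $\leq d_G-1$ from $\ell$) with one extra hop coming from each neighbor, which suffices to reach every vertex of the network.

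For the forward direction I would fix a time $T_0$ after which $C_j=1$ holds indefinitely for every $j$ and prove by induction on $l\in\{1,\dots,d_G\}$ that there is a finite time $T_l$ after which $S_i[l,d_i]=1$ for every node $i$. The base case is immediate from Assumption~\ref{asm:timing}, since each $i$ wakes up within $\bar T_i$ of $T_0$ and assigns $S_i[1,d_i]\gets C_i=1$. For the inductive step, after $T_{l-1}$ each node wakes up again within $\bar T_i$ and broadcasts its column, so a subsequent wake-up of $i$ finds $S_i[l-1,b]=1$ for all $b$ and therefore sets $S_i[l,d_i]=1$. Pushing the induction to $l=d_G$ and allowing one final round of broadcasts yields $\prod_b S_i[d_G,b]=1$ at every node in finite time.

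For the converse, let $t^\star$ be the first time at which any node $\ell^\star$ satisfies $\prod_b S_{\ell^\star}[d_G,b]=1$. Before $t^\star$ no \stopp signal has been transmitted, so every $1$-entry of $S_{\ell^\star}$ must arise from a local computation at $\ell^\star$ or from a legitimate broadcast received from a neighbor. I would then prove by induction on $l$ the invariant: if $S_i[l,d_i]=1$ at some time $t\leq t^\star$, then every vertex at distance $\leq l-1$ from $i$ had $C=1$ at some time $\leq t$. The base $l=1$ uses $S_i[1,d_i]=C_i$. The step uses that $S_i[l,d_i]=1$ forces $S_i[l-1,d_i]=1$ and each $S_i[l-1,j_i]=1$, with the latter copied from a prior broadcast in which $S_j[l-1,d_j]=1$; the inductive hypothesis then applies at $i$ and at each $j\in\NN_i$, and combining this with the elementary observation that every vertex at distance $\leq l-1$ from $i$ is at distance $\leq l-2$ from some vertex of $\{i\}\cup\NN_i$ closes the step. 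Instantiating with $l=d_G$ at $\ell^\star$ and at each of its neighbors covers all of $\VV$. Any later node $\ell$ that stops (possibly because of a propagated \stopp signal) inherits the conclusion, which already held at $t^\star$. The main obstacle I anticipate is the temporal bookkeeping in the converse: one needs to argue that the value copied into $S_i[l-1,j_i]$ truly reflects a time at which $S_j[l-1,d_j]=1$, which relies on Assumption~\ref{asm:no_awake} to rule out concurrent writes and on the monotonic transition $C_j\to 1$ to make earlier observations persist.
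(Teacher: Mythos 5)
Your proof is correct and follows essentially the same route as the paper's: the forward direction propagates the flags one graph hop per row of $S_i$ (you organize this as an induction on the row index with times $T_l$, where the paper follows a worst-case path of length $d_G$), and the converse establishes that a $1$ in row $l$ certifies $C=1$ within distance $l-1$ (the paper states the contrapositive, propagating zeros outward from a node with $C_j=0$). Your explicit handling of the first stopping time $t^\star$ to rule out spurious $1$s injected by \stopp signals is a small point of extra care that the paper's argument leaves implicit, but the underlying mechanism is identical.
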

\begin{proof}
In a time interval of $\bar{T}$, every node wakes up at least
  once. In the worst case, in which the distance between two generic nodes $j$ and
  $\ell$ is equal to the graph diameter ($d_G$), in a time
  $d_G\bar{T}$ there exists an ordered subsequence of
  awakenings following the path $j\to \ell$. Hence, if $C_i=1$ $\forall i$, node $h$
  along this path will broadcast $S_h[:,d_h]=1$ to its neighbors and $S_\ell[d_G,:]$ will eventually contain only $1$s, thus leading to 
  $\prod_{b=1}^{d_\ell}S_\ell[d_G,b]=1$.
  Now assume that $\prod_{b=1}^{d_\ell}S_\ell[d_G,b]=1$ at some time instant and suppose, by contradiction, that there exists some node $j$ for which $C_j=0$ at all previous time instants. By assumption, all nodes
  $i\in \NN_j$ have $S_i[1, j_i]=0$ (because columns sent
  by $j$ always contained a zero in that position), which in turn, implies that
  $S_i[2{:}d_G, d_i]=0$ for all $i\in \NN_j$. Then, for every
  $i\in \NN_j$, every $m\in \NN_i$ have
  $S_m[2, i_m]=0$ and hence $S_m[3:d_G, d_m]=0$.  By induction, node $\ell$ must
  have at least one element of its $d_G$-th row equal to $0$, which contradicts
  the assumption and hence completes the proof.
\end{proof}

Notice that the only information that the nodes need to know about the network is the graph diameter $d_G$.
It is worth recalling that such a parameter can be preliminary computed in a distributed way (see, e.g.,~\cite{oliva2016distributed} and references therein). Furthermore, only an upper bound on $d_G$ is necessary to run Algorithm~\ref{alg:async_AND}, at the price of an increase in the time needed in order to achieve the termination condition~\eqref{eq:stop_S_equation}.

\subsection{Asynchronous distributed optimization algorithm}
It is worth stressing that the augmented Lagrangian defined in~\eqref{eq:L} is not
separable in the local decision variables $x_i$. Thus, the minimization step
in~\eqref{eq:x_minimization} cannot be performed by independently minimizing
with respect to each variable. 
In order to devise a distributed algorithm for solving
problem~\eqref{pb:distributed_problem} it is useful to define a \emph{local augmented
Lagrangian}, whose minimization with respect to the decision variable $x_i$ is
equivalent to the minimization of the entire augmented Lagrangian
\eqref{eq:L}. To this aim, let $x_{\NN_i} = [x_{j}]_{j\in \NN_i\cup\{i\}}$,
$\LL_{\NN_i}=[\lambda_i,\mu_i,\nu_{i},[\nu_{ji}]_{j\in \NN_i}]$,
and $\p_{\NN_i}=[\peqi,\pini,\rho_i, [\rho_{ji}]_{j\in\NN_i}]$.
Then, the $i$-th local augmented Lagrangian, which groups
together all the terms of~\eqref{eq:L} depending on $x_i$, is defined as
\begin{align}
\tLag_{\p_{\NN_i}}(x_{\NN_i},\LL_{\NN_i})
&= f_i(x_i)+ \nonumber\\
&+\sum_{j\in \NN_i}\left[x_i^\top(\nu_{ij}-\nu_{ji})+\frac{\rhoij+\rho_{ji}}{2}\|x_i-x_j\|^2\right]+\nonumber\\
&+\lambda_{i}^\top \h_{i}(x_i)+\frac{\peqi}{2}\|\h_{i}(x_i)\|^2+\nonumber\\
&+\bm{1}^\top q_{\pini}(\mu_i, \g_i(x_i)).\label{eq:L_local}
\end{align}
The following proposition holds.
\begin{proposition}\label{prop:Lipschitz} 
Let Assumption~\ref{asm:C2_functions} hold. Then 
\begin{equation}\label{eq:gradient_equivalence}
	\nabla_{x_i}\Lag_{\p}(\x,\LL) = \nabla_{x_i}\tLag_{\p_{\NN_i}}(x_{\NN_i},\LL_{\NN_i}).
\end{equation}
Also, for fixed values of $x_j$, $j\neq i$,
\begin{equation}\label{eq:min_equivalence}
	\arg \min_{x_i}\Lag_{\p}(\x,\LL)= \arg\min_{x_i}\tLag_{\p_{\NN_i}}(x_{\NN_i},\LL_{\NN_i}).
\end{equation}
Moreover, $\tLag_{\p_{\NN_i}}(x_{\NN_i},\LL_{\NN_i})$ has Lipschitz continuous gradient for all $i\in\VV$.
\end{proposition}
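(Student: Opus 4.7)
The plan is to handle the first two claims via a single bookkeeping identity on $\Lag_{\p}$, and then to establish the Lipschitz property by term-by-term differentiation of $\tLag_{\p_{\NN_i}}$. First, I would isolate the terms of~\eqref{eq:L} that depend on $x_i$: only two blocks of the outer sum contain such terms, namely the $k=i$ summand, which contributes $f_i(x_i)$, the edge halves $\nu_{ij}^\top(x_i-x_j)+\tfrac{\rho_{ij}}{2}\|x_i-x_j\|^2$ for each $j\in\NN_i$, and the local constraint pieces $\lambda_i^\top \h_i(x_i)+\tfrac{\peqi}{2}\|\h_i(x_i)\|^2+\bm{1}^\top q_{\pini}(\mu_i,\g_i(x_i))$; and, for each $j\in\NN_i$, the $k=j$ summand, which contributes the twin $\nu_{ji}^\top(x_j-x_i)+\tfrac{\rho_{ji}}{2}\|x_j-x_i\|^2$. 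Combining the two linear edge halves into $x_i^\top(\nu_{ij}-\nu_{ji})$ and the two quadratic halves into $\tfrac{\rho_{ij}+\rho_{ji}}{2}\|x_i-x_j\|^2$ reproduces~\eqref{eq:L_local} exactly. This lets one write $\Lag_{\p}(\x,\LL)=\tLag_{\p_{\NN_i}}(x_{\NN_i},\LL_{\NN_i})+R_i$, with $R_i$ collecting all remaining terms and being independent of $x_i$; differentiation in $x_i$ then gives~\eqref{eq:gradient_equivalence}, and freezing $x_j$ for $j\neq i$ reduces $R_i$ to an additive constant in $x_i$ and yields~\eqref{eq:min_equivalence}.

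For the Lipschitz property of $\nabla\tLag_{\p_{\NN_i}}$, I would differentiate each family of terms in~\eqref{eq:L_local} separately. By Assumption~\ref{asm:C2_functions}, $\nabla f_i$ is Lipschitz. The coupling pieces $x_i^\top(\nu_{ij}-\nu_{ji})+\tfrac{\rho_{ij}+\rho_{ji}}{2}\|x_i-x_j\|^2$ are affine-plus-quadratic in $(x_i,x_j)$ and have constant Hessian, hence a trivially Lipschitz gradient. The equality-constraint piece has Hessian $\sum_\ell(\lambda_i+\peqi\,\h_i(x_i))_\ell\,\nabla^2 \h_i^{(\ell)}(x_i)+\peqi\,\nabla\h_i(x_i)\,\nabla\h_i(x_i)^\top$, and for the inequality piece the definition of $q_c$ in~\eqref{eq:q_operator} provides the closed-form gradient $\nabla \g_i(x_i)^\top\max\{0,\mu_i+\pini\,\g_i(x_i)\}$, which is $C^1$; combining the $1$-Lipschitzness of $a\mapsto\max\{0,a\}$ with the bounds on $\nabla \g_i$ and $\nabla^2 \g_i$ from Assumption~\ref{asm:C2_functions} yields a Lipschitz constant for this piece as well.

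The main subtlety is in this last step: a strict reading of Assumption~\ref{asm:C2_functions} bounds only the component Hessians of $\h_i$ and $\g_i$, whereas the outer product $\nabla \h_i\,\nabla \h_i^\top$ in the penalty Hessian, and the factor $\h_i$ multiplying $\nabla^2 \h_i^{(\ell)}$, involve the constraint functions and their Jacobians themselves, which a priori grow polynomially in $\|x_i\|$. I would address this in the way standard for augmented-Lagrangian analyses, namely by reading the Lipschitz conclusion on the bounded set containing the iterate sequence generated by ASYMM, and then summing the per-term Lipschitz constants obtained above into an overall bound on $\|\nabla\tLag_{\p_{\NN_i}}(x_{\NN_i},\LL_{\NN_i})-\nabla\tLag_{\p_{\NN_i}}(y_{\NN_i},\LL_{\NN_i})\|$.
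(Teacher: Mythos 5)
Your handling of the first two claims coincides with the paper's: both isolate the $x_i$-dependent part of $\Lag_{\p}$ (the $i$-th summand together with the twin edge terms $\nu_{ji}^\top(x_j-x_i)+\tfrac{\rho_{ji}}{2}\|x_j-x_i\|^2$ coming from the neighbors' summands), verify that this is exactly $\tLag_{\p_{\NN_i}}$, and write $\Lag_{\p}=\tLag_{\p_{\NN_i}}+\Psi$ with $\Psi$ independent of $x_i$, from which \eqref{eq:gradient_equivalence} and \eqref{eq:min_equivalence} are immediate. For the Lipschitz claim the routes genuinely differ. The paper argues top-down on the global Lagrangian: it invokes \cite[Proposition~3.1]{bertsekas2014constrained} to get $C^2$ smoothness away from the kink set $\g_i(x_i)=-\mu_i/\pini$, asserts that Assumption~\ref{asm:C2_functions} makes $\nabla_{\x\x}\Lag_{\p}$ bounded, and then transfers the block component-wise Lipschitz property to $\tLag_{\p_{\NN_i}}$ through \eqref{eq:gradient_equivalence}. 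You instead work bottom-up, differentiating \eqref{eq:L_local} term by term and handling the $\max$ in $q_{\pini}$ via its $1$-Lipschitzness; your explicit gradient of the inequality block, $\nabla\g_i(x_i)^\top\max\{0,\mu_i+\pini\g_i(x_i)\}$, is correct. Your version has the merit of surfacing a real imprecision that the paper's one-line assertion glosses over: bounded Hessians of the components of $\h_i,\g_i$ do \emph{not} bound $\nabla\h_i\,\nabla\h_i^\top$ or the factors $\h_i^{(\ell)}\nabla^2\h_i^{(\ell)}$ appearing in the penalty Hessians, since under Assumption~\ref{asm:C2_functions} alone these grow polynomially in $\|x_i\|$; hence the gradient of the (local or global) augmented Lagrangian is Lipschitz only on bounded sets, unless the constraints have globally bounded gradients. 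Your repair --- reading the Lipschitz constant on a bounded set containing the ASYMM iterates --- is the standard one and is what the paper implicitly relies on; its only cost is that the stepsize $1/L_i$ used in task T1 then depends on that set. Aside from making this restriction explicit, both arguments are sound and reach the same conclusion.
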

\begin{proof}
From~\eqref{eq:L} and~\eqref{eq:L_local} it can be easily seen that 
\begin{equation}
  \Lag_{\p}(\x,\LL)=\tLag_{\p_{\NN_i}}(x_{\NN_i},\LL_{\NN_i})+\Psi(x_{-i},\LL_{-i})
\end{equation}
where $\Psi(x_{-i},\LL_{-i})$ is a function which does \emph{not} depend on local variables of node $i$. Hence~\eqref{eq:gradient_equivalence} and~\eqref{eq:min_equivalence} follow.
By Assumption~\ref{asm:C2_functions}
$\Lag_{\p}(\x,\LL)\in C^2$ in the set
$\{\x\mid g_i(x_i)\neq -\mu_i/\zeta_i, \forall i\in\VV\}$ for all $\LL$ and
$\p>0$ (see e.g.~\cite{bertsekas2014constrained}, Proposition 3.1). Hence, one
has that $\nabla_{\x}\Lag_{\p}(\x,\LL)$ is almost everywhere differentiable for
all $\LL$ and $\p>0$. Moreover, from Assumption~\ref{asm:C2_functions}, it holds
that $\nabla_{\x\x}\Lag_{\p}(\x,\LL)$ is bounded, and $\Lag_{\p}(\x,\LL)$ has
Lipschitz continuous gradient for all $\LL$ and $\p>0$. Hence $\Lag_{\p}(\x,\LL)$ has block component-wise Lipschitz continuous gradients and, from~\eqref{eq:gradient_equivalence} $\tLag_{\p_{\NN_i}}(x_{\NN_i},\LL_{\NN_i})$ has Lipschitz continuous gradient.
\end{proof}

The ASYMM algorithm for solving problem~\eqref{pb:distributed_problem} in an asynchronous and distributed way is now introduced. When a node wakes up, it performs either one gradient descent step on its
local augmented Lagrangian or a multiplier update.
The nodes keep performing gradient descent steps, until all of them have reached a prescribed tolerance on the norm of the local augmented Lagrangian gradient.
This check is performed by nodes themselves in a distributed way, by using the logic-AND algorithm presented in Section~\ref{subsec:AND}. When a node gets aware of this condition, it performs one ascent step on its local
multiplier vector. After it has received
the updated multipliers from all its neighbors, it gets back to the primal update.

More formally, when node $i$ wakes up, it checks through a flag, called $M_{done}$,
if its multiplier vector and the neighboring ones are up to date. If this is the case (which corresponds to $M_{done}=0$),
it performs one of the following two tasks:
\begin{figure*}[!b]
	\centering
	\includegraphics[width=\linewidth]{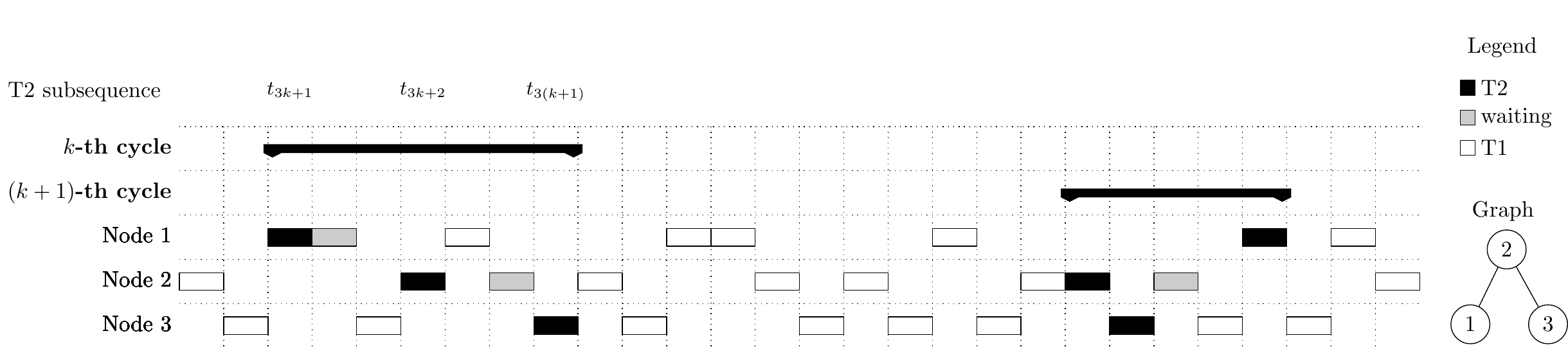}
	\caption{An example of the execution of ASYMM for a network with three nodes.}
	\label{fig:gantt}
\end{figure*}
\begin{enumerate}
\item[T1.] \label{task:1} If $\prod_{l=1}^{d_i}S_i[d_G,l]\neq 1$, node $i$ performs a
  gradient descent step on its local augmented Lagrangian (using $1/L_i$ as stepsize, where $L_i$ is the Lipschitz constant of $\tLag_{\p_{\NN_i}}(x_{\NN_i},\LL_{\NN_i})$)
  and checks if the
  local tolerance $\epsilon_i>0$ on the gradient has been reached. If the latter is true, it corresponds to setting
  $C_i\gets 1$ in the distributed logic-AND Algorithm~\ref{alg:async_AND}. Then,
  it updates matrix $S_i$, and broadcasts the updated $x_i$ and the column $S_i[:,d_i]$ to its
  neighbors.
\item[T2.] \label{task:2} If $\prod_{l=1}^{d_i}S_i[d_G,l] = 1$, node $i$ performs an
  ascent step on the local multiplier vector and updates the local penalty
  parameters according to equations~\eqref{eq:nu}-\eqref{eq:mu} and~\eqref{eq:penalty_eq_x}-\eqref{eq:penalty_ineq}, respectively. Then, it sets $M_{done}=1$ and broadcasts the updated multipliers and
  penalty parameters $\nu_{ij}$ and $\rho_{ij}$ (associated to constraints
  $x_i=x_j$) to its neighbors.
\end{enumerate}

\begin{algorithm}
	\caption{ASYMM}\label{alg:async}
	\begin{algorithmic}
		\init Initialize $x_i$, $\LL_i$, $\NN_i$, $\p_i$, $S_i=\bm{0}_{d_G\times d_i}$, $M_{done}$ = 0.
		\item[]

		\awake
		\If{$\prod_{b=1}^{d_i}S_i[d_G,b]\neq 1$ \textbf{and} \textbf{not} $M_{done}$}\\
		\vspace{-1ex}
		\State $x_i\gets x_i-\frac{1}{L_i}\nabla_{x_i}\tLag_{\p_{\NN_i}}(x_{\NN_i},\LL_{\NN_i})$\\
		\vspace{-1ex}
		\If{$\|\nabla_{x_i}\tLag_{\p_{\NN_i}}(x_{\NN_i},\LL_{\NN_i})\|\leq\epsilon_i$} $S_i[1,d_i]\gets 1$\\
		\vspace{-1.5ex}
		\EndIf
		\State $S_i[l,d_i]\gets\prod_{b=1}^{d_i}S_i[l-1,b]$ for $l=2,...,d_G$\\
		\vspace{-1.5ex}
		\State \broad $x_i$, $S_i[:,d_i]$ to all $j\in \NN_i$
		\EndIf
		\item[]
		\If{$\prod_{b=1}^{d_i}S_i[d_G,b]=1$ \textbf{and} \textbf{not} $M_{done}$}\\
		\vspace{-1ex}
		\State $\nu_{ij}\gets\nu_{ij}+\rho_{ij}(x_i-x_j)$ for $j\in \NN_i$\\
		\vspace{-2ex}
		\State $\lambda_{i}\gets \lambda_{i}+\peqi \h_{i}(x_i)$\\
		\vspace{-2ex}
		\State $\mu_{i}\gets \max\{0,\, \mu_{i}+\pini \g_{i}(x_i)\}$\\
		\vspace{-1ex}
		\State update $\peqi$, $\pini$ and $\rho_i$
		\State $M_{done}$ $\gets$ 1
		\State \broad $\nu_{ij}$, $\rhoij$ to $j \in \NN_i$
		\EndIf
		\item[]

		\idle
		\If{$S_j[:,d_j]$ received from $j\in \NN_i$ and not already received some new $\nu_{ji}$}
		\State $S_i[l,j_i]\gets S_j[l,d_j]$ for $l=1,...,d_G$
		\EndIf
		\State \textbf{if} $\nu_{ji}$ and $\rho_{ji}$ received from $j\in \NN_i$  set $S_i\left[d_G,:\right]\gets 1$
		\State \textbf{if} $x_j^{new}$ received from $j\in \NN_i$, update $x_j\gets x_j^{new}$
		\If{$M_{done}$ \textbf{and} $\nu_{ji}$ received from all $j\in \NN_i$}
		\State $M_{done}$ $\gets$ 0, $S_i\gets\bm{0}_{d_G\times d_i}$, update $\epsilon_i$
		\EndIf
	\end{algorithmic}
\end{algorithm}

When in \iidle, node $i$ continuously listens for messages from its neighbors,
but does not broadcast any information. Received messages may contain either
local optimization and logic-AND variables, or multiplier
vectors and penalty parameters. If necessary, node $i$ suitably updates local logic-AND variables or the $M_{done}$ flag. Notice that, for node $i$,
sending a new multiplier $\nu_{ij}$ or receiving a new $\nu_{ji}$ corresponds to
sending or receiving a \stopp signal in the asynchronous logic-AND algorithm. 

The ASYMM pseudocode is reported in Algorithm~\ref{alg:async} and an example of its execution is shown in Figure~\ref{fig:gantt}, where tasks T1 and T2 are denoted by white and black blocks, respectively.

\begin{remark}
  A gossip-based distributed algorithm based on the Method of Multipliers for convex optimization problems has been proposed in~\cite{jakovetic2011cooperative}. In ASYMM the logic-AND allows the nodes to perform multipliers updates asynchronously, while in~\cite{jakovetic2011cooperative} a global clock is employed to regulate such an update in a synchronous way.
  One main advantage of ASYMM is that it guarantees that a prescribed tolerance is reached by each node in the primal descent, which in turn is a crucial feature when solving nonconvex optimization problems.
\end{remark}

\section{ASYMM Analysis}
\label{sec:analysis}
In order to analyze the ASYMM algorithm, we start by noting that under
Assumptions~\ref{asm:timing} and~\ref{asm:no_awake}, from a global perspective,
the local asynchronous updates can be treated as an algorithmic evolution in which,
at each iteration, only one node wakes up in an essentially cyclic fashion.

Given the above, it is possible to associate an iteration of
the distributed algorithm to each triggering.
Denote by $t\in\natural$ a discrete,
\emph{universal} time indicating the $t$-th iteration of the algorithm and
define as $i_t\in\VV$ the index of the node triggered at iteration
$t$.

In the following, it will be shown that: (i) there is an \emph{equivalence} relationship
between ASYMM and an inexact Method of Multipliers and (ii) under suitable technical conditions, a bound on the gradient of the augmented Lagrangian can be derived from the local tolerances $\epsilon_i$.

\subsection{Equivalence with an inexact Method of Multipliers}
Consider an inexact Method of Multipliers which consists of solving the $k$-th
instance of the augmented Lagrangian minimization by means of a block-coordinate
gradient descent algorithm (see, e.g., \cite{wright2015coordinate} for a
survey), which runs for a certain number of iterations $h^k$. A pseudo code of
this inexact Method of Multipliers (inexact MM) is given in Algorithm~\ref{alg:inexactMM}, where
$i_{h}$ is the index of the block chosen at iteration $h$ and the penalty parameters are updated as in~\eqref{eq:penalty_eq}-\eqref{eq:penalty_ineq}.
\begin{algorithm}
	\begin{algorithmic}
		\For{$k=0,1,...$}
		\State $\hat{\x}^0=\x^k$
		\For{$h =1,...,h^k$}
		\State $\hat{\x}^{h+1}=\hat{\x}^h-\frac{1}{L_{i_h}^k} U_{i_h}\nabla_{x_{i_h}}\Lag_{\p}(\hat{\x}^h,\LL^k)$
		\EndFor
		\State $\x^{k+1}=\hat{\x}^{h^k+1}$
		\State $\nu_{ij}^{k+1} = \nu_{ij}^k+\rho_{ij}^k(x_i^{k+1}-x_j^{k+1}),\quad \forall(i,j)\in\EE$
		\State $\lambda_{i}^{k+1} = \lambda_{i}^k + \peqi^k \h_{i}(x_i^{k+1}),\forall i\in\VV$
		\State $\mu_{i}^{k+1} = \max\{0,\, \mu_{i}^k+\pini^k \g_{i}(x_i^{k+1})\},\quad \forall i\in\VV$
		\EndFor
	\end{algorithmic}
	\caption{Inexact MM}\label{alg:inexactMM}
\end{algorithm}

It is worth remarking that the ordered sequence of indexes $h$ and $k$ used
in Algorithm~\ref{alg:inexactMM} does not coincide with the sequence of
universal times $t$ of ASYMM. It will be rather shown that a (possibly reordered)
subsequence of iterations in the universal time $t$ of ASYMM gives rise
to suitable $h$ and $k$ sequences in Algorithm~\ref{alg:inexactMM}.

Let $t_1,t_2,...$ be a subsequence of $\{t\}$ such that at each $t_\ell$ a
multiplier update (task T2) has been performed by node $i_{t_\ell}$ and let $t_1$
be the time instant of the first multiplier update. Then, the following result
holds.
\begin{lemma}\label{lemma:l_cyc} %
  Each sequence $(i_{t_{kN+1}},...,i_{t_{(k+1)N}})$, for $k=0,1,...$, is a
  permutation of $\{1,...,N\}$. Moreover, if $\epsilon_i>0$ for all $i\in\VV$,
  multiplier updates occur infinitely many times.
\end{lemma}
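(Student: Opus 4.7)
My plan is to prove the two assertions of Lemma~\ref{lemma:l_cyc} separately; both rest on a careful analysis of the interplay between the flag $M_{done}$, the reset of the local matrix $S_i$, and the distributed logic-AND of Algorithm~\ref{alg:async_AND}.

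For the permutation assertion I would establish two bounds. Upper bound, at most one T2 per node per window of $N$ events: after $i$ performs T2 at some $t_{\ell_1}$, the flag $M_{done}$ at $i$ is set to $1$, which in Algorithm~\ref{alg:async} disables both the T1 and the T2 branches of AWAKE. The flag is cleared only upon reception of $\nu_{ji}$ from every $j\in\NN_i$, and each $\nu_{ji}$ is broadcast exclusively inside the T2 branch of $j$; hence every neighbor of $i$ must execute T2 in $(t_{\ell_1},t_{\ell_2})$ before $i$ can reset $S_i\gets\bm{0}$ and resume primal updates. For $i$ to execute a second T2 the logic-AND condition $\prod_b S_i[d_G,b]=1$ must then be rebuilt from scratch. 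A node currently in $M_{done}$ does not broadcast $S_j[:,d_j]$ in AWAKE, so $i$ can absorb fresh columns only from neighbors that have already reset; the same argument applied to each such neighbor forces its own neighbors to have completed T2, and iterating this cascade along the hop structure of the connected graph $\GG$ proves that every node of $\VV$ must have executed T2 at least once in $(t_{\ell_1},t_{\ell_2})$. Lower bound, at least one T2 per node per window: as soon as any node $h$ executes T2, each neighbor $j$ receives $\nu_{hj}$ and, via the idle branch, sets $S_j[d_G,:]\gets\bm{1}$; by Assumption~\ref{asm:timing} node $j$ wakes up within $\bar T$ and, being out of $M_{done}$ by hypothesis, fires its own T2. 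Propagating along paths of length up to $d_G$ shows that every node executes T2 within $d_G\bar T$ of the first T2 of the round. Combining the two bounds proves that $(i_{t_{kN+1}},\ldots,i_{t_{(k+1)N}})$ is a permutation of $\{1,\ldots,N\}$.

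For the infinitely-many-updates claim I argue by contradiction. Suppose only finitely many T2's occur and let $T^*$ be the time of the last one. By the wave-propagation argument above, the round containing $T^*$ is completed by $T^*$, and within a further bounded time every node receives the outstanding $\nu_{ji}$ messages and clears $M_{done}$; from some $T^{**}\geq T^*$ on, the penalty parameters and multipliers are frozen at $\p^*,\LL^*$ and the algorithm collapses into an essentially-cyclic block coordinate gradient descent on $\Lag_{\p^*}(\cdot,\LL^*)$ with block stepsizes $1/L_i$, the blockwise Lipschitz constants guaranteed by Proposition~\ref{prop:Lipschitz}. The blockwise descent lemma yields
\begin{equation*}
\Lag_{\p^*}(\x^{h+1},\LL^*)\leq\Lag_{\p^*}(\x^{h},\LL^*)-\tfrac{1}{2L_{i_h}}\bigl\|\nabla_{x_{i_h}}\Lag_{\p^*}(\x^{h},\LL^*)\bigr\|^2,
\end{equation*}
and summing, together with the lower boundedness of $\Lag_{\p^*}$ (inherited from Assumption~\ref{asm:C2_functions} and boundedness of iterates on any minimization that enters the descent iteration), forces $\|\nabla_{x_{i_h}}\Lag_{\p^*}(\x^h,\LL^*)\|\to 0$. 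Because each node wakes up infinitely often (essentially cyclic by Assumptions~\ref{asm:timing}--\ref{asm:no_awake}) and $\epsilon_i>0$, at some wake-up of every $i$ the local gradient falls below $\epsilon_i$ and $S_i[1,d_i]$ is set to $1$ permanently (no further reset can happen by assumption). Proposition~\ref{prop:logic-AND} then guarantees that within finite time some node $\ell$ attains $\prod_b S_\ell[d_G,b]=1$, and since $\ell\notin M_{done}$ it must execute a further T2, contradicting the assumption.

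The main obstacle I anticipate is in the permutation argument: making the ``fresh information'' bookkeeping rigorous, i.e. showing that any $S_j$ column broadcast by $j$ during a previous round cannot persist in $i$'s matrix across $i$'s reset and thereby spuriously trigger $\prod_b S_i[d_G,b]=1$ in the next round. This relies on two observations that deserve care: the reset $S_i\gets\bm{0}$ wipes every column including the neighbour ones, and nodes still in $M_{done}$ do not broadcast $S$, so after $j$'s T2 the next $S_j$ broadcast that $i$ can receive must come from the post-reset T1 phase of $j$. Together these justify a clean round-by-round application of Proposition~\ref{prop:logic-AND} and close the cascade that underlies both directions of the permutation property.
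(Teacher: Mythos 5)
Your proof is correct and follows essentially the same route as the paper's: the permutation claim is obtained by showing (via the $M_{done}$ flag and the post-reset logic-AND) that no node can perform a second multiplier update before every other node has performed its first, and the infinitude of updates is proved by contradiction, distinguishing whether the last round is incomplete (connectivity forces another task T2) or complete (block coordinate descent with frozen multipliers drives every local gradient below $\epsilon_i$, so Proposition~\ref{prop:logic-AND} triggers a new T2). The only substantive difference is that you replace the paper's citation for convergence of the essentially cyclic block coordinate descent with an explicit sufficient-decrease argument, which quietly requires lower-boundedness of the augmented Lagrangian along the iterates, a point glossed over at the same level of rigor in the paper's own proof.
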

\begin{proof}
  Consider the first multiplier update, performed by node
  $i_{t_{1}}$ at $t_{1}$. Then, until all other nodes have performed their
  first multiplier update, there will be some node $j$ which has not received back all
  the new multipliers from its neighbors and has $M_{done}=1$. Hence, it cannot
  run task T1, and consequently it cannot set and broadcast $S_j[1,d_j]=1$.  So,
  node $i_{t_{1}}$ has at least one element of the last row of
  $S_{i_{t_{1}}}$ at $0$, hence it cannot perform another multiplier update
  (although it could have started over performing task T1). The first part of the proof is completed by induction.
  In order to prove the second part of the lemma, assume, by contradiction, that
  the number of multiplier updates is finite and denote by $t_M$ the time
  instant of the last multiplier update. If $\text{mod}(M,N) \neq 0$, from the
  connectedness of $\GG$ at $t_M+1$ there exists at least one node $j$ that: i)
  has not updated its multipliers yet; ii) has a neighbor who has already
  performed a multiplier update. Hence, node $j$ will perform a multiplier update
  next time it wakes up (which occurs in finite time by Assumption~\ref{asm:timing}), thus
  contradicting the assumption that $t_M$ was the time instant of the last
  multiplier update. If $\text{mod}(M,N) = 0$ (i.e.,
    $(i_{t_{M-N+1}}, \dots, i_{t_{M-1}}, i_{t_M})$ is a permutation of
    $\until{N}$), all the nodes that wake up after $t_M$ will run task T1. From
    a global perspective, this can be seen as a block coordinate descent
    algorithm on the augmented Lagrangian with a given multiplier vector. Since
    this algorithm converges to a stationary point~\cite{xu2017globally}, every
    node $i \in \VV$ will reach its local tolerance $\epsilon_i > 0$ in finite
    time and then set $S_i[1,d_i] =1$. From Proposition~\ref{prop:logic-AND}, after a finite
  number of iterations some node $j$ will satisfy
  $\prod_{b=1}^{d_j} S_j[d_G,b] = 1$ and hence it will run a new multiplier
  update, which contradicts the assumption that $t_M$ was the time instant of
  the last multiplier update.
\end{proof}

In the sequel the subset of universal times $t$,
$\{{t_{kN+1}},...,{t_{(k+1)N}}\}$, during which $N$ tasks T2 are performed, will be referred to as the $k$-th \emph{cycle} of ASYMM. 

The example in Figure~\ref{fig:gantt} shows the $k$-th and $(k{+}1)$-th cycles of an ASYMM run. According to Lemma~\ref{lemma:l_cyc}, during a single cycle each node performs task T2 once. It is worth remarking that in the $k$-th cycle node $1$ starts over the $(k{+}1)$-th primal minimization before node $3$ has completed its $k$-th multiplier update. This happens because node $1$ has already received the updated multipliers and penalty parameters from node $2$, which is the only neighbor of node $1$. The same thing happens to node $3$ in the $(k{+}1)$-th cycle. This is a key feature of the asynchronous distributed scheme underlying ASYMM. It can also be observed that when node $3$ wakes up for the first time, after node $1$ has done the $k$-th dual update, it performs again task T1. In fact, node $3$ has not received a STOP signal yet, because it is not connected directly to node $1$.

Define $\LL_i=[\lambda_i,\mu_i,\nu_{i}]$ and let
$\tilde{x}_{i}^t$ and $\tilde{\LL}_{i}^t$ be the value of the state vector and of the
multiplier vector of node $i$, at iteration $t$, computed according to ASYMM.  Then, the following
Corollary holds, whose proof follows immediately from Lemma~\ref{lemma:l_cyc}.

\begin{corollary}
\label{cor:const_lambda}
  Let $\tau\in\{t_{kN+1},...,t_{(k+1)N}\}$, for some $k=0,1,...$. Then one has
  $\tilde{\LL}_{i_\tau}^{t}=\tilde{\LL}_{i_\tau}^{\tau}$ for all $t=\tau,\tau{+}1,\dots,t_{(k+1)N}$. \oprocend
\end{corollary}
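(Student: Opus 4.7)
The plan is to argue directly from the algorithmic structure of ASYMM together with Lemma~\ref{lemma:l_cyc}. Since $\tau$ belongs to the $k$-th cycle $\{t_{kN+1},\dots,t_{(k+1)N}\}$ and since by Lemma~\ref{lemma:l_cyc} the sequence $(i_{t_{kN+1}},\dots,i_{t_{(k+1)N}})$ is a permutation of $\{1,\dots,N\}$, node $i_\tau$ executes task T2 exactly once within this cycle, namely at iteration $\tau$. The corollary therefore amounts to showing that the local multiplier vector $\tilde{\LL}_{i_\tau}$ is not modified between $\tau$ and the end of the cycle.

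First, I would observe that the vector $\tilde{\LL}_i=[\lambda_i,\mu_i,\nu_i]$ of node $i$ is stored only locally at node $i$ and is updated \emph{only} inside task T2 executed by node $i$ itself. Inspecting Algorithm~\ref{alg:async}, task T1, as well as all \iidle actions of node $i_\tau$ (processing incoming $S_j[:,d_j]$, incoming $\nu_{ji},\rho_{ji}$, incoming $x_j^{new}$, and the resetting of $M_{done}$ and $S_i$), either modify logic-AND variables, neighbor copies of primal variables, or the flag $M_{done}$, but never $\lambda_{i_\tau}$, $\mu_{i_\tau}$, or $\nu_{i_\tau j}$. Moreover, awakenings of nodes different from $i_\tau$ cannot alter the memory of node $i_\tau$ at all.

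Next, I would argue that node $i_\tau$ cannot re-enter task T2 strictly within the same cycle. Immediately after running T2 at iteration $\tau$, node $i_\tau$ sets $M_{done}\gets 1$, which blocks the execution of both branches of the \aawake routine until all neighbors have delivered their updated $\nu_{ji}$. In particular, before $M_{done}$ can be reset to $0$, every neighbor of $i_\tau$ must itself have completed task T2, and by Lemma~\ref{lemma:l_cyc} this cannot occur twice for any node during the same cycle. Hence the next execution of T2 by $i_\tau$ occurs no earlier than the $(k+1)$-th cycle, i.e.\ strictly after $t_{(k+1)N}$.

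Combining these two observations, no update of $\tilde{\LL}_{i_\tau}$ takes place in the universal-time interval $\{\tau,\tau+1,\dots,t_{(k+1)N}\}$, so $\tilde{\LL}_{i_\tau}^{t}=\tilde{\LL}_{i_\tau}^{\tau}$ throughout this interval, which is the desired claim. The only mildly delicate point, and the one I would write out carefully, is the second step: one has to rule out not the execution of task T1 by $i_\tau$ (which is harmless) but a premature second execution of task T2, and this is precisely what the $M_{done}$ flag together with Lemma~\ref{lemma:l_cyc} prevents.
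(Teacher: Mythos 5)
Your proposal is correct and matches the paper's approach: the paper gives no explicit proof, stating only that the corollary ``follows immediately from Lemma~\ref{lemma:l_cyc}'', and your argument is precisely the natural elaboration of that claim (the multiplier vector $\tilde{\LL}_{i_\tau}$ is touched only in task~T2 of node $i_\tau$ itself, and Lemma~\ref{lemma:l_cyc} together with the $M_{done}$ mechanism rules out a second execution of T2 by that node before $t_{(k+1)N}$). Nothing is missing.
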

Corollary~\ref{cor:const_lambda} states that once a multiplier vector is updated in a cycle, then, its value remains unchanged for the whole cycle.

Let $\tik$ be the time instant in which node $i$ performs task T2 in the $k$-th cycle, i.e.,
$\tik \in \{t_{kN+1},...,t_{(k+1)N}\}$ such that node $i$ is awake at time
$\tik$. 
By using this Corollary~\ref{cor:const_lambda}, one can define
 \begin{equation*}
 	x_{i}^{k+1}=\tilde{x}_{i}^{\tau_i^k},\quad
 	\LL_{i}^{k+1}=\tilde{\LL}_{i}^{\tau_i^k},
 \end{equation*}
$k=0,1,...$ and by using Lemma~\ref{lemma:l_cyc} and reordering the indexes $i_{\tau}$,
\begin{align*}
	\x^{k+1}&=\left[\left(x_1^{k+1}\right)^\top,...,\left(x_N^{k+1}\right)^\top\right]^\top,\\
	\LL^{k+1}&=\left[\left(\LL_1^{k+1}\right)^\top,...,\left(\LL_N^{k+1}\right)^\top\right]^\top.
\end{align*}

The next two lemmas show that a local primal (resp. multiplier) update is performed
according to a common multiplier (resp. primal) variable.
\begin{lemma}\label{lemma:l_consistency}
  For all $\tau\in\{t_{kN+1},...,t_{(k+1)N}\}$, $k=0,\!1,...$, every multiplier
  $\LL_{i_\tau}^{k+1}$ is computed using the state vector $\x^{k+1}$.
\end{lemma}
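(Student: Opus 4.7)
The plan is to fix a cycle index $k$ and a triggering time $\tau=\tau_i^k$ at which node $i=i_\tau$ executes task T2, and to show that the primal values used to update $\LL_i$ at that moment coincide with the corresponding components of $\x^{k+1}$. The T2 step involves only the local $x_i$ (for the updates of $\lambda_i$ and $\mu_i$) and the stored copies of $x_j$, $j\in\NN_i$ (for each consensus multiplier $\nu_{ij}$), so it suffices to verify that (i) the value $\tilde{x}_i^{\tau_i^k}$ used by node $i$ equals $x_i^{k+1}$ and (ii) the value of $x_j$ stored at $i$ at time $\tau_i^k$ equals $x_j^{k+1}$ for every $j\in\NN_i$. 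Point (i) is immediate from the definition $x_i^{k+1}=\tilde{x}_i^{\tau_i^k}$, since task T2 leaves $x_i$ untouched.

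For point (ii) I would distinguish two subcases according to the temporal ordering of $\tau_j^k$ and $\tau_i^k$. If $\tau_j^k<\tau_i^k$, then immediately after its T2 at $\tau_j^k$ node $j$ sets $M_{done}=1$ and, by the pseudocode, performs neither T1 nor T2 again until it has received new multipliers from every neighbor; in particular $\nu_{ij}$, which is sent by $i$ only at $\tau_i^k>\tau_j^k$, is still missing, so $j$ is effectively frozen throughout $[\tau_j^k,\tau_i^k]$. Consequently its last T1 broadcast carried the value $\tilde{x}_j^{\tau_j^k}=x_j^{k+1}$, and under the asynchronous communication model that message has been delivered to $i$ and stored before $\tau_i^k$.

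If instead $\tau_j^k>\tau_i^k$, let $t_j^\ast$ be $j$'s last T1 awakening before $\tau_i^k$ and let $x_j^{(\ast)}$ denote the value of $x_j$ broadcast at $t_j^\ast$. By the choice of $t_j^\ast$ and the assumption $\tau_j^k>\tau_i^k$, node $j$ performs neither T1 nor T2 in $(t_j^\ast,\tau_i^k]$, hence $x_j^{(\ast)}$ still resides in $j$'s local memory at time $\tau_i^k$. After running its T2, node $i$ broadcasts $\nu_{ij}$ and $\rho_{ij}$, upon reception of which the IDLE block at $j$ forces $S_j[d_G,:]\gets 1$; the T1 guard $\prod_b S_j[d_G,b]\neq 1$ then fails, so between $\tau_i^k$ and $\tau_j^k$ node $j$ cannot execute T1 either. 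Thus $x_j$ at $j$ is unchanged throughout $[t_j^\ast,\tau_j^k]$, whence $x_j^{(\ast)}=\tilde{x}_j^{\tau_j^k}=x_j^{k+1}$; combined with the fact that $i$ has $x_j^{(\ast)}$ stored at $\tau_i^k$, this yields the desired equality.

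I expect the second subcase to be the main obstacle, since one has to rule out that node $j$ slips in an extra T1 step between $i$'s T2 and its own, which would advance $x_j$ past the value stored at $i$. The argument hinges on two ingredients built into the algorithm: the $M_{done}$ flag, which freezes a node once it has completed a multiplier update, and the artificial triggering $S_j[d_G,:]\gets 1$ upon receipt of new neighbor multipliers, which prevents any further primal descent from the moment a neighbor has finished its own T2. Together with Assumption~\ref{asm:no_awake} and the convention that messages broadcast by an awake node are processed by its neighbors during their subsequent IDLE phases, these features close the case analysis.
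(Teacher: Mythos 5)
Your proof is correct and takes essentially the same route as the paper's: it reduces the claim to showing $\tilde{x}_j^{\tau}=x_j^{k+1}$ for each neighbor $j$ of the updating node, and then splits into the same two subcases (neighbor has / has not already done its T2 in cycle $k$), invoking exactly the two mechanisms the paper relies on --- the $M_{done}$ freeze and the $S_j[d_G,:]\gets 1$ triggered by receipt of the new multiplier. Your version is slightly more explicit about distinguishing $j$'s local value from the copy stored at $i$, but the argument is the same.
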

\begin{proof}
  Consider $\tau\in\{t_{kN+1},...,t_{(k+1)N}\}$ for a given $k$. Node $i_\tau$ computes
  \begin{align*}
    x_{i_\tau}^{k+1}&=\tilde{x}_{i_\tau}^\tau,\\
    \nu_{i_\tau j}^{k+1}&=\nu_{i_\tau j}^k+\rho_{i_\tau j}^k(x_{i_\tau}^{k+1}-\tilde{x}_j^{\tau}),&\forall j\in \NN_{i_\tau}\label{eq:nu_update},\\
    \lambda_{i_\tau}^{k+1}&=\lambda_{i_\tau}^k + \varrho_{i_\tau}^k \h_{i_\tau}(x_{i_\tau}^{k+1}),\\
    \mu_{i_\tau}^{k+1}&=\max\{0,\, \mu_{i_\tau}^k+\zeta_{i_\tau}^k \g_{i_\tau}(x_{i_\tau}^{k+1})\}.
  \end{align*}
  First, notice that the update of node $i_\tau$ depends only on $x_{j}$ or
  $\LL_{j}$ with $j\in \NN_{i_\tau}$.
  Then, let us show that $\tilde{x}_j^{\tau} = x_j^{k+1}$ for all $j\in \NN_{i_\tau}$.
  If a node $j\in \NN_{i_\tau}$ has already performed its multiplier update in
  the $k$-th cycle,  
  then, even if it woke up again before time $\tau$ it
  did not update $x_j$ (did not start a new primal update) because it has not
  received all the updated multipliers from its neighbors (node $i_\tau$ has not performed the
  multiplier update yet and thus has not sent $\nu_{i_\tau j}^{k+1}$ to node $j$).
  Therefore, $\tilde{x}_j^{\tau} = x_j^{k+1}$. If, vice-versa, node $j\in \NN_{i_\tau}$
  has not performed its multiplier update in
  the $k$-th cycle, 
  then $\tilde{x}_j^{\tau}$ will become $x_j^{k+1}$ next time node $j$ wakes up (because node
  $i_\tau$ has sent it the updated multiplier while it was in idle, so that $j$
  has set $S_j[d_G,:]=1$).
\end{proof}

\begin{lemma}\label{lemma:x_consistncy}
  For all $t=\tau_i^k,\tau_i^k+1,\dots,\tau_i^{k+1}$, every
  descent step on the augmented Lagrangian with respect to the block-coordinate
  $x_i$ is performed using the multiplier vector $\LL^{k+1}$.
\end{lemma}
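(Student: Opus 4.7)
The plan is to leverage the $M_{done}$ flag mechanism in Algorithm~\ref{alg:async} to pin down exactly which multiplier values are in node $i$'s memory when it performs a descent step during the interval $[\tau_i^k, \tau_i^{k+1}]$. Recall that a descent step at node $i$ uses the local Lagrangian gradient $\nabla_{x_i}\tLag_{\p_{\NN_i}}(x_{\NN_i},\LL_{\NN_i})$, where $\LL_{\NN_i}=[\lambda_i,\mu_i,\nu_i,[\nu_{ji}]_{j\in\NN_i}]$. Hence I must account separately for node $i$'s own multipliers $(\lambda_i,\mu_i,\nu_i)$ and for the cross-coupling multipliers $\nu_{ji}$ received from each neighbor $j$.

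First I would handle node $i$'s own multipliers. By definition of $\tau_i^k$, node $i$ executes task T2 at time $\tau_i^k$, producing precisely $\LL_i^{k+1}=\tilde{\LL}_i^{\tau_i^k}$, and by Lemma~\ref{lemma:l_cyc} the next time node $i$ can execute task T2 is at $\tau_i^{k+1}$. Consequently, for every $t\in[\tau_i^k,\tau_i^{k+1}]$ the local values of $\lambda_i,\mu_i,\nu_i$ stored at node $i$ are exactly those of $\LL_i^{k+1}$.

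Second I would handle the neighbor multipliers $\nu_{ji}$ for $j\in\NN_i$. Right after task T2 at $\tau_i^k$, node $i$ sets $M_{done}\gets 1$ and is henceforth prevented by the guard \emph{``and \textbf{not} $M_{done}$''} from running task T1. The flag is reset only by the \iidle clause which requires $\nu_{ji}$ (and $\rho_{ji}$) to have arrived from \emph{every} $j\in\NN_i$. Since Lemma~\ref{lemma:l_cyc} ensures that each neighbor $j$ executes task T2 exactly once between $t_{kN+1}$ and $t_{(k+1)N}$ and then broadcasts its new $\nu_{ji}$, node $i$ can only return to task T1 after all these updates have been delivered. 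At that instant the stored value of $\nu_{ji}$ at node $i$ coincides with the updated multiplier, i.e., with the $j$-component of $\LL^{k+1}$. Before $\tau_i^{k+1}$, no further multiplier update can alter these $\nu_{ji}$ values (again by Lemma~\ref{lemma:l_cyc}).

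Combining the two observations, every descent step performed by node $i$ for $t\in[\tau_i^k,\tau_i^{k+1}]$ uses $\LL_{\NN_i}$ whose components coincide with the $i$-block and the $j$-blocks ($j\in\NN_i$) of $\LL^{k+1}$, which by~\eqref{eq:gradient_equivalence} in Proposition~\ref{prop:Lipschitz} is equivalent to saying the descent uses $\LL^{k+1}$ as claimed. The main subtlety, and where care is required, is the interplay between the $M_{done}$ flag and the asynchronous arrival of STOP signals: I must be explicit that the guard on task T1 prevents any descent step with stale $\nu_{ji}$ values, and that once $M_{done}$ is reset, only $\nu_{ji}$ updates from cycle $k$ (and no earlier or later ones) can be in node $i$'s memory up to time $\tau_i^{k+1}$.
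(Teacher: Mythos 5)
Your proposal is correct and follows essentially the same route as the paper's own proof: reduce to the local multiplier vector $\LL_{\NN_i}^{k+1}$ via the gradient identity~\eqref{eq:gradient_equivalence}, and use the $M_{done}$/STOP mechanism together with Lemma~\ref{lemma:l_cyc} and Corollary~\ref{cor:const_lambda} to show node $i$ resumes task T1 only after all updated $\nu_{ji}^{k+1}$ have arrived; you simply spell out the details the paper delegates to ``arguments similar to the proof of Lemma~\ref{lemma:l_consistency}.'' One small overstatement: your closing claim that no $\nu_{ji}$ update later than cycle $k$ can reach node $i$'s memory before $\tau_i^{k+1}$ is not quite true, since a neighbor $j$ may perform its $(k{+}1)$-th task T2 before $\tau_i^{k+1}$ (cycles overlap across nodes, as in Figure~\ref{fig:gantt}); the conclusion nevertheless survives because receiving such a $\nu_{ji}$ sets $S_i[d_G,:]=1$, so node $i$'s very next awakening executes task T2 rather than a further descent step, and hence no descent step of node $i$ ever uses a multiplier newer than $\LL^{k+1}$.
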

\begin{proof}
  Node $i$ can start over a block coordinate descent iteration after $\tau_i^k$
  only when it has received all the new multipliers $\nu_{ji}^{k+1}$ (and the
  corresponding penalty parameters) from its neighbors. Thanks to~\eqref{eq:gradient_equivalence}, it is sufficient to show that each local descent on the $i$-th local augmented Lagrangian is performed using the multiplier vector $\LL_{\NN_i}^{k+1}$. This follows from Lemma~\ref{lemma:l_cyc} and Corollary~\ref{cor:const_lambda} by using arguments similar to those in the proof of Lemma~\eqref{lemma:l_consistency}.
  \end{proof}

Next lemma states that every node performs at least one primal update between
the beginning of two consecutive cycles.
\begin{lemma}\label{lemma:awake_once}
  Between $t_{kN+1}$ and $t_{(k+1)N+1}$ every node performs task $T1$ at least
  once.
\end{lemma}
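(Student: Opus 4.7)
The plan is to show that the condition enabling node $j = i_{t_{(k+1)N+1}}$ to perform task T2 at time $t_{(k+1)N+1}$, namely $\prod_{b=1}^{d_j}S_j[d_G,b]=1$, forces every node in $\VV$ to have executed task T1 at least once since its own reset in the $k$-th cycle. I will then verify that this reset occurred after $t_{kN+1}$.

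First I would recall from Lemma~\ref{lemma:l_cyc} that each node $i$ performs T2 exactly once in the $k$-th cycle, at some instant $\tau_i^k\in\{t_{kN+1},\ldots,t_{(k+1)N}\}$. Let $\sigma_i^k\geq\tau_i^k$ be the first subsequent instant at which $i$ has received the updated multipliers $\nu_{ji}$ from all its neighbors; at $\sigma_i^k$ the flag $M_{done}$ is set back to $0$ and the matrix $S_i$ is reset to $\bm{0}_{d_G\times d_i}$. Between $\tau_i^k$ and $\sigma_i^k$ node $i$ cannot enter either branch of the awake block, because $M_{done}=1$ blocks both, so during that sub-interval it neither updates $S_i$ nor broadcasts any column $S_i[:,d_i]$. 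In particular, $\sigma_i^k\geq\tau_i^k\geq t_{kN+1}$.

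Next I would apply an adapted version of Proposition~\ref{prop:logic-AND}. At $t_{(k+1)N+1}$, node $j$ simultaneously satisfies $M_{done}=0$ and $\prod_{b=1}^{d_j}S_j[d_G,b]=1$; the former yields $\sigma_j^k\leq t_{(k+1)N+1}$, which in turn ensures that every $1$ appearing in the last row of $S_j$ at $t_{(k+1)N+1}$ was produced after $\sigma_j^k$. Unpacking the update rule $S_i[l,d_i]\gets\prod_{b=1}^{d_i}S_i[l-1,b]$ together with the idle-mode rule $S_i[l,j_i]\gets S_j[l,d_j]$, the same layer-by-layer induction used in the proof of Proposition~\ref{prop:logic-AND} propagates from $j$ back through the graph: for every $i\in\VV$ there must exist a time $t_i\in[\sigma_i^k,t_{(k+1)N+1}]$ at which $C_i=S_i[1,d_i]=1$. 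Since the assignment $S_i[1,d_i]\gets 1$ appears only within task T1, node $i$ executes T1 at time $t_i\in[\sigma_i^k,t_{(k+1)N+1}]\subseteq[t_{kN+1},t_{(k+1)N+1}]$, which is precisely the conclusion of the lemma.

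The principal technical obstacle is to justify that the propagation argument from Proposition~\ref{prop:logic-AND} remains valid despite the per-cycle resets. The concern is that stale entries accumulated in a neighbor's matrix before its own reset could spuriously contribute $1$'s. This is ruled out by Lemma~\ref{lemma:l_cyc}: every neighbor $m\in\NN_j$ has performed T2 within the $k$-th cycle and therefore has zeroed the full matrix $S_m$ at $\sigma_m^k\leq t_{(k+1)N+1}$, including the column $S_m[:,i_m]$ for every $i\in\NN_m$. Consequently, any $1$ flowing into $S_j$ during $[\sigma_j^k,t_{(k+1)N+1}]$ must be traced, through a chain of post-reset broadcasts of length at most $d_G$, back to a genuine in-cycle assignment $C_i\gets 1$ at the originating node, exactly as in the inductive step of Proposition~\ref{prop:logic-AND}.
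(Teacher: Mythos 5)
Your proof is correct and follows essentially the same route as the paper's: the first $(k{+}1)$-th multiplier update requires $\prod_b S_j[d_G,b]=1$, which by the converse direction of Proposition~\ref{prop:logic-AND} forces every node to have set $S_i[1,d_i]=1$ (possible only inside task T1) after the per-cycle reset of $S_i$ to zero. Your version is simply more explicit than the paper's about why stale pre-reset entries cannot contribute, which is a welcome refinement rather than a deviation.
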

\begin{proof}
	Since at iteration $t_{(k+1)N}$ all the nodes have performed the $k$-th multiplier update, each of them has set $S_i[1, d_i] = 0$ at some time between $t_{kN+1}$ and $t_{(k+1)N}$. At time $t_{(k+1)N+1}$ the first node performs the $(k + 1)$-th multiplier update. This can occur only if each node $i$ has set $S_i[1,d_i]=1$ at some time between $t_{kN+1}$ and $t_{(k+1)N+1}$, which implies that each node performs task T1 at least once over the same time interval.
\end{proof}

The equivalence of ASYMM and Algorithm~\ref{alg:inexactMM} is stated by the next theorem.
\begin{theorem}\label{thm:equivalence}
  Let Assumptions~\ref{asm:timing} and
  \ref{asm:no_awake} hold.
  Then, ASYMM is equivalent to an instance of Algorithm~\ref{alg:inexactMM} in
  which the selection of nodes $i_h$ satisfies an essentially cyclic rule.
Moreover, if in Algorithm~\ref{alg:async}, $\epsilon_i>0$, $\forall i\in\VV$,
  the total number of primal descent steps $h^k$ is finite. 
  \end{theorem}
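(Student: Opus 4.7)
The plan is to construct a cycle-by-cycle correspondence between ASYMM and Algorithm~\ref{alg:inexactMM}, and then show that within each cycle the primal phase terminates in finite time. The cycle-based bookkeeping is already in place: Lemma~\ref{lemma:l_cyc} gives that every cycle contains exactly one multiplier update per node, and Corollary~\ref{cor:const_lambda} states that once a node updates its multiplier in cycle $k$, its value stays fixed until the end of the cycle. I would identify the outer iterate pair $(\x^{k+1},\LL^{k+1})$ defined in the excerpt with the outer iterates of Algorithm~\ref{alg:inexactMM}. Lemma~\ref{lemma:l_consistency} then gives directly that every multiplier update in cycle $k$ is computed against $\x^{k+1}$, matching exactly the update rules of Algorithm~\ref{alg:inexactMM}.

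For the inner block coordinate descent loop, the key tool is equation~\eqref{eq:gradient_equivalence} in Proposition~\ref{prop:Lipschitz}: each local step $x_i \leftarrow x_i - \frac{1}{L_i}\nabla_{x_i}\tLag_{\p_{\NN_i}}(x_{\NN_i},\LL_{\NN_i})$ performed in task T1 is identical to the global block coordinate gradient step $\hat{\x}^{h+1}=\hat{\x}^h-\frac{1}{L_i}U_i\nabla_{x_i}\Lag_{\p}(\hat{\x}^h,\LL)$ appearing in Algorithm~\ref{alg:inexactMM}. By Lemma~\ref{lemma:x_consistncy}, every such step performed between the beginning of cycle $k$ and the beginning of cycle $k{+}1$ is computed against the multiplier $\LL^{k+1}$, so concatenating these T1 steps in universal time and reindexing them as $h=1,\dots,h^k$ exactly reproduces the inner loop. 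The essentially cyclic character of the sequence $\{i_h\}$ then follows from Lemma~\ref{lemma:awake_once}, which guarantees that within any window corresponding to two consecutive cycles every index in $\until{N}$ appears at least once as a T1-performing node, yielding a uniform bound $M$.

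For the second claim, suppose by contradiction that within some cycle $k$ the primal phase performs infinitely many T1 steps. Then the multiplier vector stays equal to $\LL^{k+1}$ indefinitely, and by the identification above the local updates reduce to a block coordinate gradient descent on $\Lag_{\p}(\cdot,\LL^{k+1})$, which has block component-wise Lipschitz continuous gradient by Proposition~\ref{prop:Lipschitz}. With stepsizes $1/L_i$ and an essentially cyclic rule, this scheme converges to a stationary point~\cite{xu2017globally}; hence $\|\nabla_{x_i}\tLag_{\p_{\NN_i}}\|$ falls below $\epsilon_i>0$ at every node in finite time, so every $C_i$ eventually equals $1$. By Proposition~\ref{prop:logic-AND}, some node then detects $\prod_{b=1}^{d_i}S_i[d_G,b]=1$ within finitely many further wake-ups and moves to task T2, contradicting the assumption that the primal phase of cycle $k$ never terminates. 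The main technical obstacle is ensuring that each descent step is executed against a consistent pair of global primal and multiplier vectors, but this is precisely the purpose of Lemmas~\ref{lemma:l_cyc}--\ref{lemma:x_consistncy}, so the overall proof amounts to composing these preparatory results with the already-established convergence of cyclic block coordinate gradient descent.
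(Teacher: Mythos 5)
Your proposal is correct and follows essentially the same route as the paper: reindex the T1 steps of each cycle as the inner loop of Algorithm~\ref{alg:inexactMM}, invoke Lemmas~\ref{lemma:l_consistency} and~\ref{lemma:x_consistncy} for consistency of the primal/multiplier variables, and get the essentially cyclic rule from Lemma~\ref{lemma:awake_once}. The only cosmetic difference is that for finiteness of $h^k$ the paper simply cites Lemma~\ref{lemma:l_cyc}, whereas you inline the contradiction argument (block coordinate descent convergence via the cited reference plus Proposition~\ref{prop:logic-AND}) that constitutes the proof of that lemma's second part.
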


\begin{proof}
Define $H_i^k{=}\{t\mid \tau_i^{k-1}{<}t{<}\tau_i^{k},\, i \text{ runs task T1 at } t\}$
for $k=0,1,...$, where $\tau_i^{-1}$ is the first time instant in which node $i$
is awake (doing task T1). Then, define ${H}^k=\bigcup_{i\in\VV}H_i^k$,
$h^k=|{H}^k|$ and let
\begin{equation*}
 \upsilon_1<\upsilon_2<...<\upsilon_{h^k}\
\end{equation*}
be the ordered sequence of elements (time instants) in ${H}^k$.

By setting $i_h = i_{v_h}$, for $h=1,\dots, h^k$, in
Algorithm~\ref{alg:inexactMM} and using Lemma~\ref{lemma:l_consistency} and Lemma~\ref{lemma:x_consistncy}, one has that ASYMM turns
out to be equivalent to an instance of Algorithm~\ref{alg:inexactMM}.
Moreover, from Lemma~\ref{lemma:awake_once} every node runs task T1 at least once in $\{
v_1, \dots, v_{h^k} \}$. Hence, Algorithm~\ref{alg:inexactMM} is run with an essentially
cyclic update rule over a time window of length $h_k$, which is finite
due to Lemma~\ref{lemma:l_cyc}.
\end{proof}

\begin{remark}
  The duration of each ASYMM cycle can be bounded both from below and from above as follows. From the definition of $h^k$ it can be easily verified that $h^k\geq N$. Moreover, from
  Lemma~\ref{lemma:awake_once} one has that $h^k\leq t_{(k+1)N+1}-t_{kN+1}-N$. Hence,
  $t_{(k+1)N+1}-t_{kN+1}\geq 2N$. On the other side, from Proposition~\ref{prop:logic-AND} it follows that $t_{(k+1)N}-t_{kN+1}\leq d_G \bar{T}$ for all $k$.
\end{remark}

\subsection{A bound on the Lagrangian gradient}
In virtue of the equivalence result in Theorem~\ref{thm:equivalence}, ASYMM inherits the convergence properties of Algorithm~\ref{alg:inexactMM} applied to Problem~\ref{pb:distributed_problem}. In particular, in order to guarantee the convergence of the inexact MM to a local minimum, it is necessary that the Lagrangian minimization is asymptotically exact (see, e.g.,~\cite[Section~2.5]{bertsekas2014constrained}).
To this aim, in this section an upper bound $\varepsilon$ on the norm of the gradient of the augmented Lagrangian~\eqref{eq:L} is derived as a function of the local tolerances $\epsilon_i$ employed in task T1 of ASYMM. The result requires (at a given iteration $k$) a technical assumption of local (strong) convexity of the augmented Lagrangian.

Let us introduce the following preliminary result.
\begin{lemma}\label{lemma:local_gradient_bound}
  Let $\Phi(y_1, \ldots, y_N)$ be a $\sigma$-strongly convex function with block
  component-wise Lipschitz continuous gradients (with $L_i$ being the Lipschitz
  constant with respect to block $y_i$) in a subset $Y\subseteq\R^n$.
Let $\{\y^h\}$ be a sequence generated according to $\y^{h+1}=\y^h-\frac{1}{L_{i_h}} U_{i_h}\nabla_{y_{i_h}}\Phi(\y^h)$,
  where $\y^0\in Y$ and indexes $i_h\in\until{N}$ are drawn in an essentially cyclic way.
  If, for some $\bar{h}_i>0$, $\|\nabla_{y_i}\Phi(\y^{\bar{h}_i})\|\leq \epsilon_i, \; \forall i \in
\until{N}$,
  then

  \begin{equation*}
    \|\nabla_{\y}\Phi(\y^h)\|\leq
\sqrt{\sum_{i=1}^N\left(\frac{L_i\epsilon_i}{\sigma}\right)^2}
  \end{equation*}
  for all $h\geq\bar{h}=\max_{i\in\until{N}}\bar{h}_i$.
\end{lemma}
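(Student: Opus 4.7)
The plan is to turn each partial-gradient tolerance $\epsilon_i$ into a per-block distance-to-minimum bound via strong convexity, then back into a tight partial-gradient bound via block Lipschitz continuity, and finally aggregate the resulting per-block bounds. Two preliminary facts will be used throughout: the descent lemma with stepsize $1/L_{i_h}$ implies $\Phi(\y^{h+1}) \le \Phi(\y^h) - \tfrac{1}{2L_{i_h}}\|\nabla_{y_{i_h}}\Phi(\y^h)\|^2$, so $\{\Phi(\y^h)\}$ is monotonically non-increasing; and $\sigma$-strong convexity guarantees a unique minimizer $\y^*\in Y$ with $\nabla\Phi(\y^*)=0$.

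For each block $i$, let $y_i^{**}(y_{-i})=\arg\min_{y_i}\Phi(y_i,y_{-i})$, which is well defined since $\Phi(\,\cdot\,,y_{-i})$ inherits $\sigma$-strong convexity from $\Phi$. Two elementary inequalities will drive the argument: strong convexity restricted to block $y_i$ yields $\|y_i-y_i^{**}(y_{-i})\| \le \tfrac{1}{\sigma}\|\nabla_{y_i}\Phi(y_i,y_{-i})\|$, while block Lipschitz continuity combined with $\nabla_{y_i}\Phi(y_i^{**}(y_{-i}),y_{-i})=0$ gives the converse $\|\nabla_{y_i}\Phi(y_i,y_{-i})\| \le L_i\,\|y_i-y_i^{**}(y_{-i})\|$. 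Applied at $h=\bar h_i$ the first inequality and the hypothesis immediately yield $\|y_i^{\bar h_i}-y_i^{**}(y_{-i}^{\bar h_i})\| \le \epsilon_i/\sigma$.

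Next I would show, by induction on $h\ge \bar h$, that the per-block distance bound $\|y_i^h-y_i^{**}(y_{-i}^h)\| \le \epsilon_i/\sigma$ persists for every $i\in\until{N}$. Two cases arise at each inductive step. If block $i$ is the one updated at iteration $h$, the update is a gradient-descent step with stepsize $1/L_i$ on the $\sigma$-strongly convex, $L_i$-smooth function $\Phi(\,\cdot\,,y_{-i}^h)$, which is non-expansive with respect to its minimizer $y_i^{**}(y_{-i}^h)$, so the bound is preserved and $y_{-i}$ is left unchanged. If some other block $j\neq i$ is updated, $y_i$ is unchanged but $y_{-i}$ changes, so the reference point $y_i^{**}(y_{-i}^h)$ may shift; a perturbation argument using the smoothness of $\Phi$ is required to bound this shift. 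Once the per-block distance bound is established for all $h\ge \bar h$, applying the block Lipschitz inequality yields $\|\nabla_{y_i}\Phi(\y^h)\| \le L_i\epsilon_i/\sigma$, and squaring and summing over $i$ delivers the claim $\|\nabla\Phi(\y^h)\|^2 = \sum_{i=1}^N\|\nabla_{y_i}\Phi(\y^h)\|^2 \le \sum_{i=1}^N (L_i\epsilon_i/\sigma)^2$.

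The main obstacle I anticipate is the second case of the induction, namely propagating the per-block bound under updates of other blocks. Controlling how much the block minimizer $y_i^{**}(y_{-i})$ moves when $y_{-i}$ is perturbed requires exploiting the cross-block smoothness of $\Phi$ (through its Hessian structure) together with the essentially cyclic update schedule, so that perturbations accumulated during a cycle can be absorbed without losing the per-block bound. Everything else consists of applying the two clean inequalities above, so the delicacy of the argument concentrates entirely in this stability step.
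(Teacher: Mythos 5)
Your overall skeleton --- convert each local tolerance $\epsilon_i$ into a per-block distance-to-minimizer bound via strong convexity, convert back into a gradient bound via the block Lipschitz constant, and aggregate with $\|\nabla\Phi(\y^h)\|^2=\sum_i\|\nabla_{y_i}\Phi(\y^h)\|^2$ --- is exactly the skeleton of the paper's appendix proof, which runs the chain $\sigma\|y-y^\star\|\le\|\nabla\Phi(y)\|\le L\|y-y^\star\|$ together with nonexpansiveness of the gradient step toward the minimizer, applied block by block. But your write-up stops being a proof at precisely the point where all of the difficulty lives. The inductive case ``some other block $j\neq i$ is updated'' is not a technicality to be absorbed later: when $y_j$ moves by $\tfrac{1}{L_j}\|\nabla_{y_j}\Phi(\y^h)\|$, the reference point $y_i^{**}(y_{-i})$ can shift by an amount of order $\tfrac{L_{ij}}{\sigma L_j}\|\nabla_{y_j}\Phi(\y^h)\|$, where $L_{ij}$ is a cross-block coupling constant. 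Even in the favorable case where the inductive hypothesis gives $\|\nabla_{y_j}\Phi(\y^h)\|\le L_j\epsilon_j/\sigma$, this shift is of order $L_{ij}\epsilon_j/\sigma^2$, which is comparable to --- and can exceed --- the bound $\epsilon_i/\sigma$ you are trying to propagate. The only contraction available to you (the gradient step on block $i$ itself, which shrinks the distance to the block minimizer by a factor at best $1-\sigma/L_i$) occurs only once per essentially cyclic sweep and need not dominate the drift accumulated from the foreign updates in between. So the induction as set up does not close; at best it yields a bound inflated by cross-coupling terms, not the clean $\sqrt{\sum_{i}(L_i\epsilon_i/\sigma)^2}$.

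To be fair, the paper's own proof has the same structure and is equally silent on this moving-target issue: its auxiliary lemmas concern the single-block sequence $y^{h+1}=y^h-\tfrac{1}{L}\nabla\Phi(y^h)$ with a fixed minimizer $y^\star$, and they are then invoked per block as if $y_{-i}$ were frozen during the block-coordinate iteration, which it is not. You have therefore correctly reverse-engineered the intended argument and, to your credit, put your finger on its weak link --- but flagging the gap is not the same as filling it. A complete argument needs either an explicit perturbation-and-contraction analysis showing that the drift of $y_i^{**}(y_{-i})$ accumulated over one cycle is dominated (which will generally require additional hypotheses on the cross-block couplings), or a different route that works with the global minimizer $\y^\star$ of $\Phi$ and the full gradient rather than with the moving block minimizers. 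As submitted, the proposal is an accurate plan whose central step is missing.
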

\begin{proof}
  See the Appendix.
\end{proof}
The next result provides a bound on the norm of the gradient of the augmented Lagrangian $\Lag(\x,\LL)$ used in ASYMM.
\begin{proposition}\label{prop:conv}
  Let Assumption~\ref{asm:C2_functions} hold and assume that $\x^k$ generated
  by ASYMM belongs to a neighborhood of a local minimum of $\Lag_{\p^k}(\x,\LL^{k})$ where $\Lag_{\p^k}(\x,\LL^{k})$ is 
  $\sigma^k$-strongly convex. 
  Denote by $\epsilon_i^k$ the local tolerance set by node $i$ for the primal descent during cycle $k$ and by $L_i^k$ the Lipschitz constant of $\nabla_{x_i}\tLag_{\p_{\NN_i}^k}(x_{\NN_i},\LL_{\NN_i}^k)$. Then, it holds $$\|\nabla_{\x}\Lag_{\p^k}(\x^{k+1},\LL^k)\| \leq
   \varepsilon^k
   =\sqrt{\sum_{i=1}^N\left(\frac{L_i^k\epsilon_i^k}{\sigma^k}\right)^2}.$$
\end{proposition}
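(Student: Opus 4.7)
The plan is to reduce Proposition~\ref{prop:conv} to Lemma~\ref{lemma:local_gradient_bound} by invoking the equivalence between ASYMM and the inexact Method of Multipliers already established in Theorem~\ref{thm:equivalence}, together with the gradient/Lipschitz identification in Proposition~\ref{prop:Lipschitz}. The bound we need is precisely the form of conclusion produced by Lemma~\ref{lemma:local_gradient_bound}, so the real work is to verify its hypotheses in the $k$-th ASYMM cycle.

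First I would fix the cycle index $k$ and take $\Phi(\cdot)=\Lag_{\p^k}(\cdot,\LL^k)$, with the block partition $\x=\sum_i U_i x_i$ used throughout the paper. By the hypothesis of the proposition, $\x^k$ lies in a neighborhood of a local minimum on which $\Phi$ is $\sigma^k$-strongly convex; this supplies the strong-convexity hypothesis of Lemma~\ref{lemma:local_gradient_bound} on a suitable subset $Y$. Moreover, by Proposition~\ref{prop:Lipschitz}, $\nabla_{x_i}\Lag_{\p^k}(\x,\LL^k)=\nabla_{x_i}\tLag_{\p_{\NN_i}^k}(x_{\NN_i},\LL_{\NN_i}^k)$, and so $\Phi$ has block component-wise Lipschitz continuous gradient with the very constants $L_i^k$ mentioned in the statement.

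Next I would use Theorem~\ref{thm:equivalence} to identify the ASYMM iterates during cycle $k$ with the iterates $\hat{\x}^h$ of the inexact MM inner loop: each triggered task T1 performs exactly the block-coordinate update $\hat{\x}^{h+1}=\hat{\x}^h-\tfrac{1}{L_{i_h}^k}U_{i_h}\nabla_{x_{i_h}}\Phi(\hat{\x}^h)$, and Lemma~\ref{lemma:awake_once} guarantees that the sequence of awakened indices $i_h$ is essentially cyclic. This gives the descent-rule hypothesis of Lemma~\ref{lemma:local_gradient_bound}. At the end of the cycle, the logic-AND termination mechanism fires only after each node has set $C_i\gets 1$, which by the check in task T1 means there is an iteration $\bar{h}_i$ at which $\|\nabla_{x_i}\tLag_{\p_{\NN_i}^k}(x_{\NN_i}^{\bar{h}_i},\LL_{\NN_i}^k)\|\leq \epsilon_i^k$; by the gradient identity above this is exactly $\|\nabla_{x_i}\Phi(\hat{\x}^{\bar{h}_i})\|\leq \epsilon_i^k$, providing the last hypothesis needed.

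Finally, since $\x^{k+1}=\hat{\x}^{h^k+1}$ with $h^k+1\geq \bar{h}:=\max_i \bar{h}_i$ (the multiplier update is triggered only after every node has reached its tolerance), Lemma~\ref{lemma:local_gradient_bound} yields
\begin{equation*}
\|\nabla_{\x}\Lag_{\p^k}(\x^{k+1},\LL^k)\|\leq \sqrt{\sum_{i=1}^N\left(\frac{L_i^k\epsilon_i^k}{\sigma^k}\right)^2},
\end{equation*}
which is the claimed bound $\varepsilon^k$. The main obstacle I anticipate is not algebraic but bookkeeping: one must check carefully that the $\bar{h}_i$ produced by the ASYMM termination test corresponds, after the index reordering of Theorem~\ref{thm:equivalence}, to the \emph{same} block-coordinate descent trajectory on $\Phi$ that Lemma~\ref{lemma:local_gradient_bound} analyzes, so that the inequality $\|\nabla_{x_i}\Phi(\hat{\x}^{\bar{h}_i})\|\leq\epsilon_i^k$ really holds at the matched iterate rather than at a stale one. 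Lemmas~\ref{lemma:l_consistency} and~\ref{lemma:x_consistncy} are exactly what is needed to close this gap, since they certify that every descent step within the cycle is taken with respect to the common multiplier $\LL^k$, so the inner iterates form a bona fide block-coordinate descent sequence on a single function $\Phi$.
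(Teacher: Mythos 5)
Your proposal is correct and follows essentially the same route as the paper's proof: identify the cycle-$k$ primal updates with a block coordinate descent on $\Lag_{\p^k}(\cdot,\LL^k)$ via Theorem~\ref{thm:equivalence}, use Proposition~\ref{prop:Lipschitz} to transfer the local tolerances $\epsilon_i^k$ from $\tLag$ to $\Lag$, and conclude with Lemma~\ref{lemma:local_gradient_bound}. The extra bookkeeping you supply (Lemmas~\ref{lemma:l_consistency} and~\ref{lemma:x_consistncy} guaranteeing all inner iterates use the common multiplier $\LL^k$) is exactly the content the paper delegates to Theorem~\ref{thm:equivalence}, so no new ideas are needed or missing.
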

\begin{proof}
  From Proposition~\ref{prop:Lipschitz}, $\Lag_{\p^k}(\x,\LL^k)$ has block component-wise Lipschitz continuous gradient with constants $L_i^k$.
  Moreover, during the $k$-th cycle of ASYMM, there occurred that 
 $\|\nabla_{x_i}\tLag_{\p_{\NN_i}^k}(x_{\NN_i},\LL_{\NN_i}^k)\|\leq\epsilon_i^k$ and hence, by \eqref{eq:gradient_equivalence}, also $\|\nabla_{x_i} \Lag_{\p^k}(\x,\LL^{k})\|\leq\epsilon_i^k$. Then, being $\x^{k+1}$ generated through a block coordinate descent (according to Theorem~\ref{thm:equivalence}), the proof follows from Lemma~\ref{lemma:local_gradient_bound}.
\end{proof}

Proposition~\ref{prop:conv} relates the local tolerances adopted by the nodes in the primal descent to a global bound on the norm of the gradient of the augmented Lagrangian.
The result requires to assume that the vector $\x^k$ generated during the $k$-th cycle of ASYMM lies in a neighborhood of a local minimum of the current augmented Lagrangian, in which the augmented Lagrangian itself is strongly convex. 
This assumption is indeed strong, but it is somehow standard in the nonconvex optimization literature (see, e.g.,~\cite[Section~2.2.4]{bertsekas2014constrained}). In practice it turns out to be typically satisfied after a sufficient number of iterations of multiplier/penalty parameter updates.
In fact, as $\p^k$ grows according to the update rules \eqref{eq:penalty_eq_x}-\eqref{eq:penalty_ineq}, the augmented Lagrangian tends to become locally strongly convex.
Moreover, from a practical point of view, it has been observed that choosing the obtained $\x^k$ as the initial condition for the $(k{+}1)$-th minimization usually generates sequences $\{\x^k\}$ that remain within a neighborhood of the same local minimum of Problem~\eqref{pb:distributed_problem}, thus meaning that there exists a cycle $\bar{k}$ after which the assumption in Proposition~\ref{prop:conv} will hold indefinitely. Hence, if the assumptions of Proposition~\ref{prop:conv} hold from a certain cycle $\bar{k}$ and the local tolerances $\epsilon_i^k$ vanish as $k\to\infty$, the minimization of the augmented Lagrangian turns out to be asymptotically exact. Therefore, convergence results such those in~\cite[Section~2.5]{bertsekas2014constrained} can be recovered, when applying ASYMM to Problem~\eqref{pb:distributed_problem}. The interested reader is referred to~\cite[Chapter~2]{bertsekas2014constrained} for a thorough discussion on the convergence properties of the Method of Multipliers.

\section{Dealing with big-data optimization}
\label{sec:extensions}
The ASYMM algorithm can be easily amended to deal with so called
  \emph{distributed big-data optimization}, \cite{notarnicola2017distributed},
  i.e., the distributed solution of problems in which $x\in\R^n$ with $n$ very
  large.
  In this set up arising, e.g., in estimation and learning problems, two main problems may arise. On one hand, the primal and multiplier
  update steps may not be executable in a single step by some node because
the computation of the whole gradient in the primal step may be cumbersome. 
Moreover, communication bottlenecks may arise, in fact it may happen for some node $i$,
that the local optimization variable $x_i$ does not fit the communication channels
between node $i$ and its neighbors.

Assume each agent $i$ to partition its local decision variable $x_i$ in $N_i$ blocks, i.e.,
$x_i=\left[x_{i,1} \dots x_{i,N_i}\right]^\top$, where $x_{i,m}=U_{i,m}^\top x_i$
and $x_i=\sum_{m=1}^{N_i}U_{i,m}x_{i,m}$.

Whenever node $i$ wakes up to perform task T1, it computes the primal descent step on one of the $N_i$ blocks (say $m$) instead of performing it on the whole $x_i$, i.e. it computes
\[
x_\im= x_\im-\frac{1}{L_\im}\nabla_{x_\im}\tLag_{\p_{\NN_i}}(x_{\NN_i},\LL_{\NN_i})
\]
where $m$ is picked in an essentially cyclic way.
Similarly the update of the local multiplier vectors can be carried out on one block at a time, e.g.,
\[
\nu_{ij,m}\gets\nu_{ij,m}+\rho_{ij}(x_\im-x_{j,m}).
\]

By following the same reasoning adopted in Sections $3$ and $4$, it can be shown that the primal descent steps are equivalent to a block coordinate descent algorithm on the augmented Lagrangian and converge to a stationary point.

The only additional assumption needed for the convergence result is that
functions $f_i$, $\h_i$, $\g_i$ have block component-wise Lipschitz continuous
gradients.  If for some node $i$, $x_i$ does not fit the communication channels
(the same holds true for $\nu_{ij}$), ASYMM is easily extended by allowing node
$i$ to transmit $x_\im$ and $U_\im$ at the end of each task T1 and to split the
transmission on $\nu_{ij}$ in multiple steps.

\section{Numerical Results}
Two examples are presented to assess the performance of ASYMM. The first one involves nonconvex local constraints, while the second requires the minimization of a nonconvex objective function.
\label{sec:numerical}
\subsection{Distributed source localization}
Consider a network of $N$ sensors, deployed over a certain region, communicating
according to a connected graph $\GG=(\VV,\EE)$, which have to solve the optimization problem
\begin{equation*}%\label{pb:SL}
\begin{aligned}
& \m_x
& & \sum_{i=1}^N f_i(x)\\
& \st
& & \|x-c_i\|-R_i\leq 0, & i=1,...,N\\
& & & r_i-\|x-c_i\|\leq 0, & i=1,...,N,
\end{aligned}
\end{equation*}
which can be rewritten in the form of problem~\eqref{pb:distributed_problem}.

\begin{figure}
\centering
\begin{subfigure}{0.49\textwidth}
		\centering
		\includegraphics[width=0.8\linewidth]{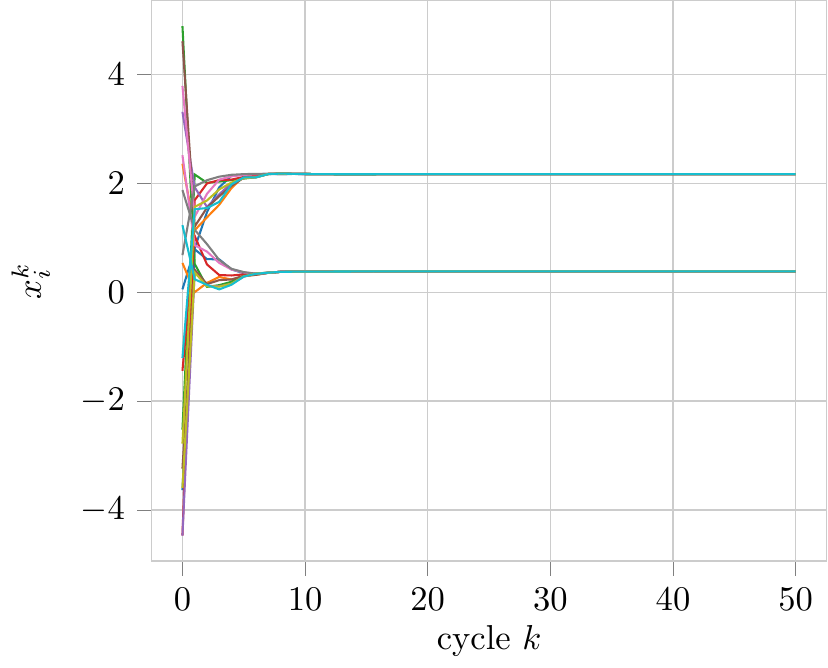}
	\caption{Evolution of the local decision variables $x_i^k$.}
        \label{fig:xk}
\end{subfigure}
\begin{subfigure}{0.49\textwidth}
	\centering
	\includegraphics[width=0.8\linewidth]{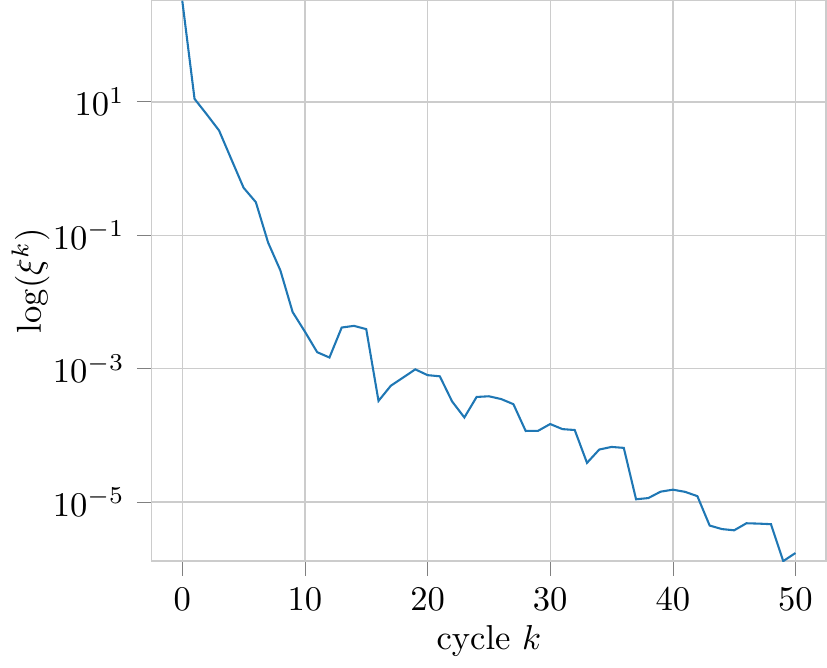}
	\caption{Logarithm of the measure of infeasibility $\xi^k$.}
	\label{fig:infeasibility}
	\end{subfigure}
	\caption{Distributed source localization}
\end{figure}

Such a problem naturally arises, for example, in the context of source
localization, in which each agent knows its own absolute location $c_i$ and
takes a noisy measurement $y_i$ of its own distance from an an emitting source
located at an unknown location $x^\star$ (for example through a laser) as
$ y_i=\|x^\star-c_i\|+w_i$.
If we make the assumption of unknown but bounded (UBB) noise, i.e. for all
$i=1,...,N$, the noise signal $w_i$ satisfies $|w_i|\leq \kappa_i$
for some $\kappa_i\geq 0$, then each node is able to define its own feasible set
$X_i$ in which the unknown source location must lie as $X_i=\{x\mid r_i \leq\|x-c_i\|\leq R_i\}$,
where $r_i=y_i-\kappa_i$ and $R_i=y_i+\kappa_i$. Notice that each set $X_i$ is a
circular crown and hence it is a non convex set.

Suppose $f_i(x_i)=x_i^\top x_i$ for all $i\in\VV$. We report a simulation with
$N=10$ nodes and $n=2$, in which $x^\star\in U[-2.5,2.5]^n$,
$c_i\in U[-2.5,2.5]^n$ and $\kappa_i\in U[0,0.3]$ for all $i\in\VV$, where $U[a,b]$
denotes the uniform distribution between $a$ and $b$. The graph is modeled
through a connected Watts-Strogatz model in which nodes has mean degree $K=2$.
Let us define the measure of infeasibility at iteration $k$ as
 \begin{align*}
 	\xi^k=&\sum_{i=1}^N\bigg(\max(0,\|x_i^k-c_i\|-R_i)+\\
 	&+\max(0,r_i-\|x_i^k-c_i\|)+\sum_{j\in \NN_i}\|x_i^k-x_j^k\|\bigg)
 \end{align*}

We run ASYMM for $25000$ iterations with $\beta=4$ and $\gamma=0.25$.  
Figure~\ref{fig:xk} shows the evolution
of $x_i^k$ for each $i\in\VV$. Finally, in Figure~\ref{fig:infeasibility} the
values of $\xi^k$ are reported. As it can be seen, the nodes performed $50$
multiplier updates each, along the $25000$ iterations (corresponding to 2500 awakenings per node on average).

\subsection{Distributed nonlinear classification}
In this example we consider a nonlinear classification problem in which the data to be classified are represented as points $z\in\R^2$ which belong to two different classes. So, each point is associated a label $y\in\{-1,1\}$, which represents the class the point belongs to.

The considered classifier can be represented as a Neural Network (NN) consisting of one input layer with two units, two hidden layers with four and two units respectively, and an output layer with one unit (respectively green, blue and red in Figure~\ref{fig:NN}). Moreover, a bias unit is present in both the input and the hidden layers (in yellow in Figure~\ref{fig:NN}). Define $w_1\in\R^{2\times 4}$, $b_1\in\R^{4}$, $w_2\in\R^{4\times 2}$, $b_2\in\R^2$, $w_3\in\R^{2\times1}$ and $b_3\in\R$. Moreover, define $x\in\R^{25}$ as the stack of all the previously defined variables.

Define the output of the first hidden layer as
\begin{equation}
	l_1(z,w_1,b_1)=\tanh(w_1^\top z + b_1).
\end{equation}
where the operator $\tanh$ is to be intended component-wise.
Similarly, the output of the second hidden layer is
\begin{equation}
	l_2(z,w_1,b_1,w_2,b_2)= \tanh(w_2^\top l_1(z,w_1,b_1) + b_2)
\end{equation}
and the output of the whole NN is
\begin{equation}\label{eq:class}
	f(z,x)= \tanh(w_3^\top l_2(z,w_1,b_1,w_2,b_2) + b_3)
\end{equation}
\begin{figure}
	\centering
	\includegraphics[width=0.5\linewidth]{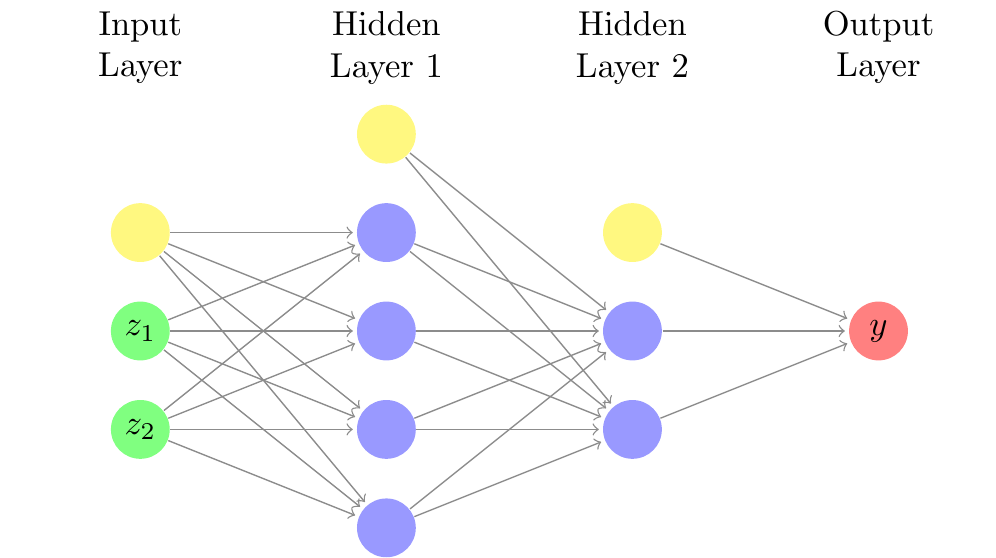}
	\caption{Graphical representation of the Neural Network used as classifier. In green the input units, in blue the hidden units, in red the output unit and in yellow the bias units.}
	\label{fig:NN}
\end{figure}
\begin{figure}
\begin{subfigure}{0.49\textwidth}
	\centering
	\includegraphics[width=0.9\linewidth]{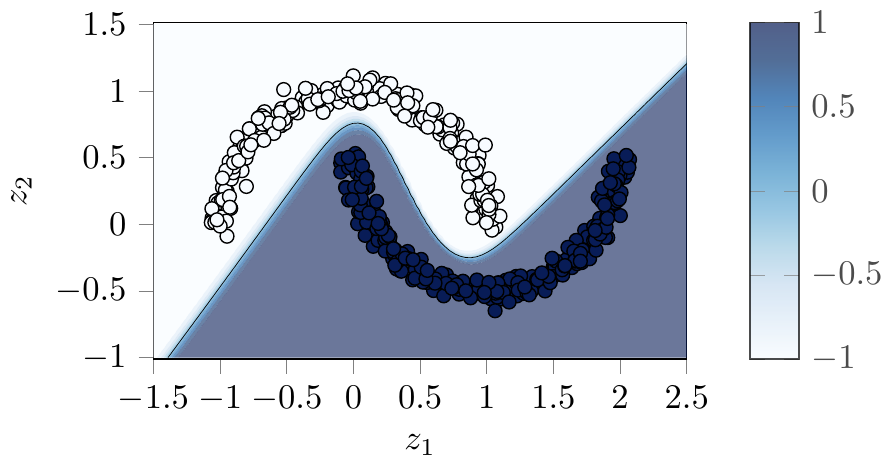}
	\caption{Two moons dataset.}
	\label{fig:classification}
\end{subfigure}
\begin{subfigure}{0.49\textwidth}
	\centering
	\includegraphics[width=0.9\linewidth]{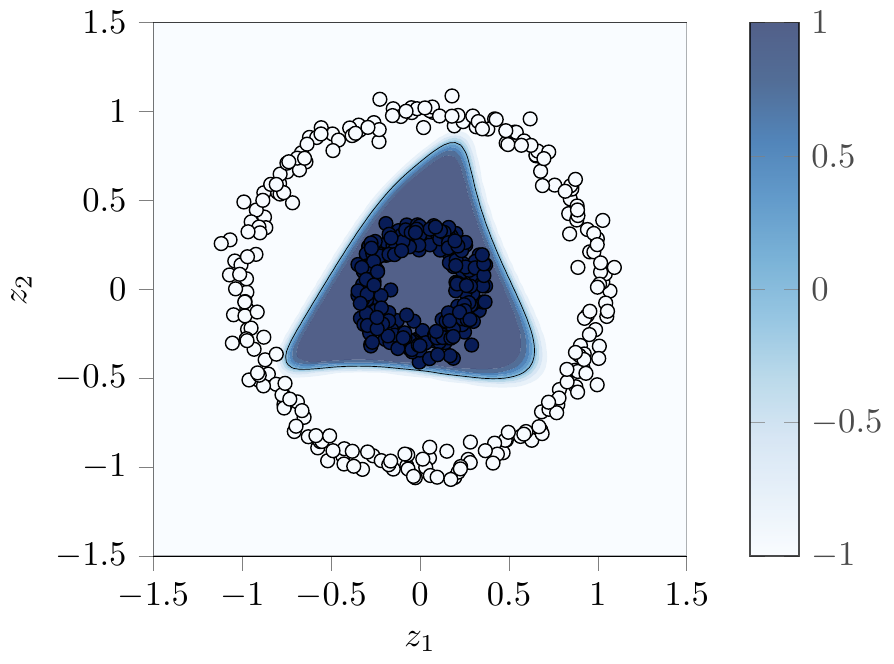}
	\caption{Nested circles dataset.}
	\label{fig:classification_circ}
	\end{subfigure}
	\caption{Distributed nonlinear classification. Blue dots represent points with label $-1$ and white dots points with label $1$. Colored regions represent the output of the classifier~\eqref{eq:class} resulting from the solution of~\eqref{pb:ML_d} provided by ASYMM. The color of the regions is associated to a number as in the color bar.}
\end{figure}

Given a set of labeled data points $(Z,Y)$, the classification problem can be written as
\begin{equation}\label{pb:ML}
\m_x \sum_{z,y\in (Z,Y)} \left( f(z,x) - y \right)^2.
\end{equation}

Suppose now that the dataset is distributed among $N$ nodes, which communicate
according to a connected graph $\GG=(\VV,\EE)$. Each node $i$ owns a portion of the dataset $(Z_i, Y_i)$, which must remain private and cannot be shared with the other nodes. In this framework, problem~\eqref{pb:ML} can be rewritten in the equivalent form
\begin{equation}\label{pb:ML_d}
\begin{aligned}
& \m_{x_1,\dots,x_N} 
& & \sum_{i=1}^N \sum_{z,y\in (Z_i,Y_i)} \left( f(z,x_i) - y \right)^2\\
& \st
& & x_i=x_j, \qquad\forall (i,j)\in\EE
\end{aligned}
\end{equation}

In our simulations two datasets are considered, which are benchmarks used in the context of machine learning. The first one consists of points belonging to two moon-shaped subsets (see Figure~\ref{fig:classification}), in which points of one subset have label $y=1$, points of the other have label $y=-1$. In the second one, data points are distributed along two nested circles. Points on the inner circle have label $y=1$, while the others have label $y=-1$ (see Figure~\ref{fig:classification_circ}).

The ASYMM algorithm has been run on $N=10$ nodes, each one processing a local dataset $(Z_i, Y_i)$ consisting of 100 points. The obtained classifiers are represented in Figures~\ref{fig:classification} and~\ref{fig:classification_circ}. The colored regions represent the value of~\eqref{eq:class} computed in those points at the (local) minimum $x^\star$ obtained by ASYMM.

It is worth stressing that the example presents a low-size classification problem, with the purpose of illustrating the proposed technique. When massive data in higher dimensional spaces are available, it is necessary to consider a more complex neural network (i.e., with a much higher number of neurons) so that the dimension of the decision variable can be fairly high. In such a case, the big-data approach proposed in Section~\ref{sec:extensions} can be adopted.

\section{Conclusions}
\label{sec:conclusions}
In this paper, an asynchronous distributed algorithm for nonconvex 
optimization problems over networks has been proposed. By suitably 
defining local augmented Lagrangian functions, the optimization process 
has been distributed among the agents of the network. A fully asynchronous 
implementation has been devised, taking advantage of a distributed 
logic-AND algorithm that allows the agents to regulate the sequence of primal 
and dual update steps. The proposed ASYMM algorithm is shown to be 
equivalent to an inexact version of the centralized method of 
multipliers, thus inheriting its main properties.
An extension to big-data problems, 
featuring high-dimensional decision variables, has been also presented. 

Ongoing research concerns the specialization of the proposed method to 
different application domains, including distributed set membership 
estimation and machine learning with constraints.

\bibliographystyle{IEEEtran}
\bibliography{biblio}

%\appendices
\section*{Appendix}
\section*{Proof of Lemma~\ref{lemma:local_gradient_bound}}
Consider a function $\Phi(y)$, $\sigma$-strongly convex in a subset $Y\subseteq\R^n$, with $L$-Lipschitz continuous gradient. Define $y^\star=\arg\min_{y\in Y}\Phi(y)$.
From the definition, if $\Phi(y)$ is $\sigma$-strongly convex, then, using the Cauchy Schwartz inequality one obtains
\begin{equation*}
\|\nabla\Phi(y) - \nabla\Phi(z)\|\geq \sigma\|y-z\|.
\end{equation*}
Then, it can be easily proved that
\begin{equation}\label{eq:strong_Lip}
	\sigma \|y-y^\star\|\leq \|\nabla\Phi(y)\|\leq L\|y-y^\star\|,\;\forall y\in Y
\end{equation}

In order to prove Lemma~\ref{lemma:local_gradient_bound} the following technical results are needed. 
\begin{lemma}\label{lemma:y_decreasing}
	Performing a gradient descent algorithm on $\Phi(y)$, starting from $y^0$ and using a step-size equal to $\frac{1}{L}$, i.e.
	\begin{equation}\label{eq:Lip_update}
		y^{h+1}=y^h-\frac{1}{L}\nabla_y\Phi(y^h)
	\end{equation}
	produces a sequence $\{y^h\}$ such that,
	\begin{equation}\label{eq:y_dec}
		\|y^{h+1}-y^\star\|\leq\|y^h-y^\star\|
	\end{equation}
\end{lemma}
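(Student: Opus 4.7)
The plan is to expand $\|y^{h+1}-y^\star\|^2$ using the update rule~\eqref{eq:Lip_update} and then bound the cross term using a property of $\Phi$ that follows from it being convex with $L$-Lipschitz gradient.

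First I would observe that, since $y^\star = \arg\min_{y\in Y}\Phi(y)$ lies (implicitly) in the interior where $\Phi$ is differentiable, $\nabla\Phi(y^\star)=0$. Then a direct expansion gives
\begin{equation*}
\|y^{h+1}-y^\star\|^2 = \|y^h-y^\star\|^2 - \frac{2}{L}\langle \nabla\Phi(y^h),\, y^h-y^\star\rangle + \frac{1}{L^2}\|\nabla\Phi(y^h)\|^2.
\end{equation*}
So the lemma reduces to showing $\tfrac{1}{L}\|\nabla\Phi(y^h)\|^2 \leq 2\langle \nabla\Phi(y^h),\, y^h-y^\star\rangle$.

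The key tool is the co-coercivity (Baillon--Haddad) property for convex functions with $L$-Lipschitz gradient: for all $y,z$,
\begin{equation*}
\langle \nabla\Phi(y) - \nabla\Phi(z),\, y-z\rangle \;\geq\; \frac{1}{L}\|\nabla\Phi(y)-\nabla\Phi(z)\|^2.
\end{equation*}
Applying this with $z=y^\star$ and using $\nabla\Phi(y^\star)=0$ gives exactly $\langle \nabla\Phi(y^h), y^h-y^\star\rangle \geq \tfrac{1}{L}\|\nabla\Phi(y^h)\|^2$. Substituting back yields
\begin{equation*}
\|y^{h+1}-y^\star\|^2 \;\leq\; \|y^h-y^\star\|^2 - \frac{1}{L^2}\|\nabla\Phi(y^h)\|^2 \;\leq\; \|y^h-y^\star\|^2,
\end{equation*}
which is~\eqref{eq:y_dec}.

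The only subtle point is choosing the right inequality: the naive combination of strong convexity ($\langle \nabla\Phi(y^h), y^h-y^\star\rangle \geq \sigma\|y^h-y^\star\|^2$) with the upper bound $\|\nabla\Phi(y^h)\|\leq L\|y^h-y^\star\|$ gives the factor $(1 - 2\sigma/L + 1)$, which is not $\leq 1$ unless $\sigma\geq L/2$. Co-coercivity avoids this mismatch and yields the contraction for free. I expect this to be the main obstacle to readers attempting a direct proof; once co-coercivity is invoked the calculation is immediate. A minor housekeeping point is that the lemma is stated on a subset $Y$, so one should tacitly assume (or note) that the iterates remain in $Y$ so that the gradient bounds apply throughout, which is consistent with the use of this lemma later in the proof of Lemma~\ref{lemma:local_gradient_bound}.
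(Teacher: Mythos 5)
Your proof is correct. The paper does not prove this lemma itself -- it simply cites \cite[Theorem 2.1.14]{nesterov2013introductory} -- and your co-coercivity argument is precisely the standard proof behind that citation (Nesterov's own version uses the slightly stronger strongly-convex interpolation inequality to extract a contraction \emph{rate}, but plain co-coercivity suffices for the non-expansiveness claimed here), so this is essentially the same approach. Your closing caveats (that $\nabla\Phi(y^\star)=0$ and that the iterates must remain in the region where the smoothness/convexity hypotheses hold) are valid and apply equally to the paper's own use of the cited result.
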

\begin{proof}
  See, e.g.,~\cite[Theorem 2.1.14]{nesterov2013introductory}.
\end{proof}
\begin{lemma}\label{thm:gradient_bound}
	Consider a sequence $\{y^h\}$ generated as $y^{h+1}=y^h-\frac{1}{L}\nabla \Phi(y^h)$ with $y^0\in Y$.
	Then:
	\begin{enumerate}
		\item \label{pt:1} for all $h\geq \bar{h}$ it holds that
		\begin{equation}\label{eq:decreasing_gradient}
		\|\nabla\Phi(y^h)\| \leq L\|y^{\bar{h}}-y^\star\|
		\end{equation}
		\item if for some $\bar{h}$ it holds that $\|\nabla \Phi(y^{\bar{h}})\|=\varepsilon$, then
		\begin{equation}\label{eq:nphi}
		\|\nabla \Phi(y^h)\|\leq \frac{L\varepsilon}{\sigma}
		\end{equation}
		for all $h\geq \bar{h}$.
	\end{enumerate}
\end{lemma}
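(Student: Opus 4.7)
The plan is to chain together the two-sided inequality~\eqref{eq:strong_Lip} with the monotonicity result of Lemma~\ref{lemma:y_decreasing}. Both parts of Lemma~\ref{thm:gradient_bound} will follow without any real difficulty once these ingredients are combined in the right order.

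For part~\ref{pt:1}, I would first apply Lemma~\ref{lemma:y_decreasing} inductively to the iterates $y^{\bar{h}}, y^{\bar{h}+1}, \ldots$ to conclude that $\|y^h - y^\star\| \leq \|y^{\bar{h}} - y^\star\|$ for every $h \geq \bar{h}$. Then the upper bound in~\eqref{eq:strong_Lip} (the Lipschitz-type side, which follows from $\nabla\Phi(y^\star)=0$ and the $L$-Lipschitz continuity of $\nabla\Phi$) gives $\|\nabla\Phi(y^h)\| \leq L\|y^h - y^\star\| \leq L\|y^{\bar{h}} - y^\star\|$, which is exactly~\eqref{eq:decreasing_gradient}.

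For part~2, I would start at the iterate $y^{\bar{h}}$ where $\|\nabla\Phi(y^{\bar{h}})\| = \varepsilon$ and apply the lower bound in~\eqref{eq:strong_Lip} (the strong-convexity side) to translate this gradient bound into a distance bound: $\sigma\|y^{\bar{h}} - y^\star\| \leq \|\nabla\Phi(y^{\bar{h}})\| = \varepsilon$, so $\|y^{\bar{h}} - y^\star\| \leq \varepsilon/\sigma$. Plugging this into the conclusion of part~\ref{pt:1} yields $\|\nabla\Phi(y^h)\| \leq L\|y^{\bar{h}} - y^\star\| \leq L\varepsilon/\sigma$ for all $h \geq \bar{h}$, which is~\eqref{eq:nphi}.

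There is really no serious obstacle here; the lemma is a short consequence of the two previously established facts. The only subtlety worth flagging is that the argument tacitly requires the iterates to remain in the subset $Y$ where strong convexity and the Lipschitz bound hold, but this is already guaranteed by the monotone contraction $\|y^{h+1} - y^\star\| \leq \|y^h - y^\star\|$ together with $y^0 \in Y$ (so that the iterates stay inside a ball around $y^\star$ contained in $Y$). With that observation in place, both inequalities in the statement are immediate.
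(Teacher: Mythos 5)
Your proof is correct and follows essentially the same route as the paper's: both parts combine the monotone contraction of $\|y^h-y^\star\|$ from Lemma~\ref{lemma:y_decreasing} with the two sides of~\eqref{eq:strong_Lip} in the same order. Your remark about the iterates remaining in $Y$ is a reasonable extra precaution that the paper leaves implicit.
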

\begin{proof}
	Using the right side of~\eqref{eq:strong_Lip} 
		and Lemma~\ref{lemma:y_decreasing} one has that
		\begin{equation*}
		\|\nabla\Phi(y^{\bar{h}+1})\|\leq L\|y^{\bar{h}+1}-y^\star\|\leq L\|y^{\bar{h}}-y^\star\|
		\end{equation*}
		By induction,~\eqref{eq:decreasing_gradient} follows directly and this concludes the proof for point~\ref{pt:1}.
		For point (ii), since $\|\nabla \Phi(y^{\bar{h}})\|=\varepsilon$, from the left side of~\eqref{eq:strong_Lip}, one has that
		\begin{equation*}
		\sigma \|y^{\bar{h}}-y^\star\|\leq\varepsilon
		\end{equation*}
		which can be rewritten as
		\begin{equation}\label{eq:low_bd}
		\|y^{\bar{h}}-y^\star\|\leq\frac{\varepsilon}{\sigma}
		\end{equation}
		Then, substituting~\eqref{eq:low_bd} in the right side of~\eqref{eq:strong_Lip}, we obtain~\eqref{eq:nphi}
		which, from point~\ref{pt:1} concludes the proof.
\end{proof}
\begin{figure}
	\centering
	\includegraphics[width=0.5\linewidth]{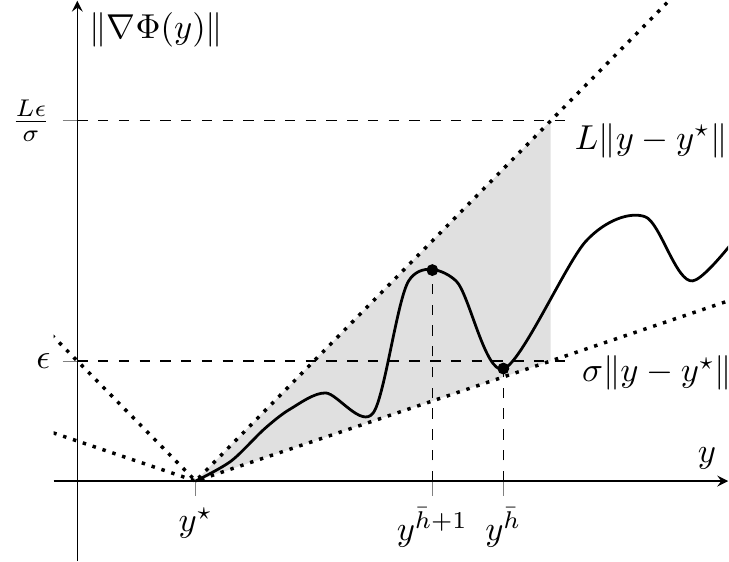}
	\caption{Representation of the results of Lemma~\ref{thm:gradient_bound}.}
	\label{fig:lipschitz}
\end{figure}
A graphical representation of the previous
Lemma is given in Figure~\ref{fig:lipschitz}. The gradient of $\Phi(y)$ is bounded by the dotted lines, as from~\eqref{eq:strong_Lip}. Moreover, from~\eqref{eq:y_dec}, given $\norm{\nabla_y\Phi(y^{\bar{h}})}$, it holds that $\norm{\nabla_y\Phi(y^{\bar{h+1}})}$ stays in the shaded region.

Finally, Lemma~\ref{lemma:local_gradient_bound} is proved by noting that, from Lemma~\ref{thm:gradient_bound} 
\begin{equation*}
\|\nabla_{y_i}\Phi(\y^{{h}})\|\leq \frac{L_i\epsilon_i}{\sigma}
\end{equation*}
for all $h\geq\bar{h}_i$, and
\begin{equation*}
\|\nabla_{\y}\Phi(\y^h)\|= \sqrt{\sum_{i=1}^N \|\nabla_{y_i}\Phi(\y^{{h}})\|^2}.
\end{equation*}

\end{document}